\newtheorem{Prop}[theorem]{Proposition}
\newtheorem{thm}[theorem]{Theorem}
\newtheorem{Cor}[theorem]{Corollary}
\newtheorem{Rem}[theorem]{Remark}
\newtheorem{Test}[theorem]{Test}
\newcommand{\R}{\mathbb{R}}
\newcommand{\C}{\mathbb{C}}
\newcommand{\wt}{\widetilde}
\newcommand{\wh}{\widehat}
\newcommand{\iunit}{{\sf {\bf i}}}
\newcommand{\la}{\lambda}
\newcommand{\g}{\gamma}
\newcommand{\ds}{\displaystyle}
\newcommand{\twoone}[2]{\left[\begin{array}{c} #1\\ #2 \end{array}\right]}
\newcommand{\twotwo}[4]{\left[\begin{array}{cc} #1 & #2 \\ #3 & #4 \end{array}\right]}
\renewcommand{\le}{\leqslant}
\renewcommand{\ge}{\geqslant}
\renewcommand{\theta}{\vartheta}
\newcommand{\wvp}{\wt \varphi}
\DeclareMathOperator{\opvec}{vec}
\DeclareMathOperator{\polar}{polar}
\begin{document}

\title{The geometric mean of two matrices %
	\\from a computational viewpoint %
}
\author{Bruno Iannazzo\thanks{ %
Dipartimento di Matematica e Informatica, Via Vanvitelli 1, Perugia, Italy. 
} %
}

\pagestyle{myheadings}\markboth{\sc Bruno Iannazzo %--
} 
{\sc Geometric Mean of two Matrices
}

\maketitle
\begin{abstract}
The geometric mean of two matrices is considered and analyzed from a computational viewpoint. Some useful theoretical properties are derived and an analysis of the conditioning  is performed. Several numerical algorithms based on different properties and representation of the geometric mean are discussed and analyzed and it is shown that most of them can be classified in terms of the rational approximations of the inverse square root functions. A review of the relevant applications is given.
\end{abstract}

\begin{keywords}
matrix geometric mean, polar decomposition,
matrix function, matrix iteration, Gaussian quadrature, Pad\'e approximation, rational minimax approximation, cyclic reduction, 
\end{keywords}

\section{Introduction}\label{sec:intro}

The geometric mean of two positive numbers $a$ and $b$ is defined as $\sqrt{ab}$. The adjective ``geometric'' is referred to the fact that the geometric mean is the length of the edge of a square  having the same area as a rectangle whose edges have length $a$ and $b$, respectively.

A typical wish in mathematics is to generalize concepts as much as possible. It is then understood why researchers have tried to generalize the concept of geometric mean to the matrix
generalizations of positive numbers, namely Hermitian positive definite matrices. 
We denote by $\mathcal P_n$ the set of $n\times n$ Hermitian positive definite matrices, which we will call just positive matrices. The geometric mean of two matrices need to be a function $\varphi:\mathcal P_n\times \mathcal P_n\to \mathcal P_n$.

The generalization is not trivial, since the formula $\sqrt{ab}$, applied to matrices would lead to the definition $\psi(A,B):=(AB)^{1/2}$, which is unsatisfatory since, for instance, $\psi(A,B)\ne \psi(B,A)$. A different, more fruitful, approach to get a fair generalization is axiomatic, that is derive the definition of geometric mean from the properties it ought to satisfy.

A natural property required by a generalization is the following: given a diagonal matrix $D=\diag(d_1,\ldots,d_n)$, with $d_i>0$, and the identity matrix $I$, the geometric mean is $\varphi(D,I):=\diag(\sqrt{d_1},\ldots,\sqrt{d_n})$. The aforementioned property is referred as {\em consistency with scalars}. 
  
The consistency with scalars is not sufficient to uniquely define a geometric mean. We need another property, namely the {\em congruence invariance}: let $A,B\in\mathcal P_n$ and $S$ belonging to the set $GL(n)$ of invertible matrices of size $n$, then 
$	\varphi(S^*AS,S^*BS)=S^*\varphi(A,B)S.   $
The congruence invariance is mathematically relevant since it states that the geometric mean interplay well with the action of $GL(n)$ over $\mathcal P_n$, that is the congruence. Moreover, it allows the geometric mean to model physical quantities.

The following is a minor variation of a result of Bhatia \cite[Sec. 4.1]{bhatia}.

\begin{theorem} Let $\varphi:\mathcal P_n\times \mathcal P_n\to \mathcal P_n$ be a function which verifies both consistency with scalars and congruence invariance, then 
\begin{equation}\label{eq:geomeandef}
	\varphi(A,B)=A^{1/2}(A^{-1/2}BA^{-1/2})^{1/2}A^{1/2}=:A\# B.
\end{equation}
\end{theorem}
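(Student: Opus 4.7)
The plan is to apply congruence invariance twice to reduce $\varphi(A,B)$ to an expression involving only diagonal matrices, at which point consistency with scalars finishes the computation.

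First I would choose $S = A^{1/2}$, which is Hermitian positive definite (hence in $GL(n)$) and satisfies $S^* = S$. Writing $A = S^*IS$ and $B = S^*MS$ with $M := A^{-1/2}BA^{-1/2} \in \mathcal{P}_n$, congruence invariance yields
\[
\varphi(A,B) \;=\; A^{1/2}\,\varphi(I,M)\,A^{1/2}.
\]
So the problem is reduced to showing $\varphi(I,M) = M^{1/2}$ for every $M \in \mathcal{P}_n$.

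Next I would unitarily diagonalize $M = UDU^*$, with $U$ unitary and $D$ a positive diagonal matrix. Applying congruence invariance with $S = U^* \in GL(n)$ (so that $S^*IS = UU^* = I$ and $S^*DS = UDU^* = M$), I obtain
\[
\varphi(I,M) \;=\; U\,\varphi(I,D)\,U^*.
\]
Consistency with scalars, applied to the pair $(I,D)$, would give $\varphi(I,D) = D^{1/2}$, whence $\varphi(I,M) = UD^{1/2}U^* = M^{1/2}$, and the claimed formula follows.

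The main subtlety, and likely the main obstacle, is that the axiom is stated only as $\varphi(D,I) = D^{1/2}$, not with the identity in the first slot. To handle this asymmetry I would apply congruence invariance once more with $S = D^{-1/2}$: since $S^*DS = I$ and $S^*IS = D^{-1}$, this gives
\[
\varphi(I,D^{-1}) \;=\; D^{-1/2}\,\varphi(D,I)\,D^{-1/2} \;=\; D^{-1/2}\,D^{1/2}\,D^{-1/2} \;=\; (D^{-1})^{1/2}.
\]
Since $D \mapsto D^{-1}$ is a bijection of positive diagonal matrices, this promotes the original axiom to $\varphi(I,D) = D^{1/2}$ for every positive diagonal $D$, completing the argument.
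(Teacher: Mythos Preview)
Your argument is correct. The paper does not actually supply a proof of this theorem---it merely attributes the result to Bhatia \cite[Sec.~4.1]{bhatia}---so there is no in-paper proof to compare against; your reduction by two applications of congruence invariance (first with $S=A^{1/2}$, then with a unitary diagonalizer) followed by consistency with scalars is exactly the standard argument one finds in that reference. Your handling of the asymmetry in the axiom, promoting $\varphi(D,I)=D^{1/2}$ to $\varphi(I,D)=D^{1/2}$ via a third congruence with $S=D^{-1/2}$, is a clean and necessary step given how the paper phrases the axiom.
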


The symbol $A^{1/2}$ stands for the principal square root of the
matrix $A$, which is a matrix satisfying the equation $X^2=A$ and
whose eigenvalues have positive real part. Such a matrix exists
and is unique if $A$ has no nonpositive real eigenvalues, in particular if $A$ is positive then $A^{1/2}$ is positive. Moreover, for any invertible matrix $M$, it holds that $M^{-1}A^{1/2}M=(M^{-1}AM)^{1/2}$ (see \cite{highambook}).

It can be proved that $A\# B$ verifies all the other properties required by a geometric mean, for instance $A\# B=B\# A$, and if $A$ and $B$ commute, then $A\# B=(AB)^{1/2}$. Thus, the definition is well established.

Notice that $A\#B$ solves the Riccati equation $XA^{-1}X=B$ and it can be proved that it is the unique positive solution \cite[Thm. 4.1.3]{bhatia}. Moreover, using the properties of the principal square root one can derive
\begin{equation}\label{eq:mean2equiv}
A\#B=A(A^{-1}B)^{1/2}=(BA^{-1})^{1/2}A
=B(B^{-1}A)^{1/2}=(AB^{-1})^{1/2}B.
\end{equation}

Yet another important property of the geometric mean can be given in terms of a special Riemannian geometry of $\mathcal P_n$. The geometry is obtained by the scalar product $\langle X,Y \rangle_A=\mathrm{trace}(A^{-1}XA^{-1}Y)$ on the tangent space $T_A\mathcal P_n$ at a positive matrix $A$ (which is the set of Hermitian matrices). 
In the resulting Riemannian manifold there exists only one
geodesic, $\gamma:[0,1]\to \mathcal P_n$, joining any two positive definite matrices $A$ and $B$ and whose explicit expression is known to be \cite[Thm. 6.1.6]{bhatia}
\begin{equation}\label{eq:geodesic}
A\#_t B:=\gamma(t)=A(A^{-1}B)^t=A^{1/2}(A^{-1/2}BA^{-1/2})^tA^{1/2}.
\end{equation}
It is now apparent that $A\# B=A\#_{1/2} B$ is the mid-point of the geodesic joining $A$ and $B$.

The definition $A\# B=A(B^{-1}A)^{-1/2}$ in terms of an inverse square root
yields a rather large number of integral representations \cite{im:palin} among
which we note the following \cite{alm}:
\begin{equation}\label{eq:gm-int}
  A\#B=\frac 1{\pi}\int_0^1
  \frac{(tB^{-1}+(1-t)A^{-1})^{-1}}{\sqrt{t(1-t)}}\,dt.
\end{equation}

The relevant applications of the geometric mean of two matrices are reviewed in Section \ref{sec:appl}.

The contributions of the paper are of different kind.  First of all, we investigate some simple theoretical properties of the geometric mean of two matrices, giving a new formula for $A\# B$ in terms of the polar decomposition and an expression of $A\# B$ in terms of polynomials in $A^{-1}B$ and $B^{-1}A$ which are useful for computational purposes. Then, we discuss the sensitivity (in the Euclidean sense) of the matrix geometric mean function with respect to perturbations getting upper and lower bounds for the condition number. Then, we devote a large part to the old and new algorithms for the geometric mean and related quantities like $A\#_t B$.

The existing methods are the
averaging technique of Anderson and Trapp \cite{at}, a method based on the matrix sign function of Higham et al. \cite{hmmt05}, the palindromic cyclic reduction of Iannazzo and Meini \cite{im:pcr} and a method based on a continued fraction expansion of Ra\"issouli and Leazizi \cite{rl}. We  show that the sign method and the palindromic cyclic reduction are two variants of the averaging  technique.

We present some further algorithms for the matrix geometric mean: the first one is based on the Cholesky factorization and the Schur decomposition and performs with great numerical stability in practice; the second is based on the expression of $A\# B$ in terms of the polar decomposition of certain matrices and is attractive since it relies on the small computational cost of the polar factor in terms of arithmetic operations (ops);
the third is a Gaussian quadrature applied to the integral
representation \eqref{eq:gm-int}; while the fourth is based on the rational minimax approximation to the inverse square root
which is essentially the algorithm of Higham, Hale and Trefethen \cite{triplenick}.

A perhaps surprising property is that the polar decomposition algorithm and the Gaussian quadrature, in their basic definition, produce the same sequence as the averaging
technique and so they can be seen as yet two more variants of it.
Moreover, they can be described in terms of certain Pad\'e approximation at $x_0=1$ of the inverse square root.

The organization of the paper is as follows. In the next section we give a couple of properties of the geometric mean which will be useful later. In Section \ref{sec:cond} we compute the condition number of the matrix geometric mean. In Section \ref{sec:schur} we discuss the Cholesky-Schur algorithm. In Section \ref{sec:pade} we discuss the algorithms related to the Pad\'e approximation of $z^{-1/2}$ while in Section \ref{sec:minimax} we discuss the ones related to its rational minimax approximation. In Section \ref{sec:appl} we review the applications where a matrix geometric mean is required. In Section \ref{sec:test} we perform some numerical tests, while in Section \ref{sec:conclusions} we draw the conclusions.

Now, we recall some concept and facts that will be used in the paper. 
We recall that any nonsingular matrix $M$ can be written as $HU$ where $H$ is Hermitian and $U$ is unitary; the latter is called the polar factor of $M$, denoted by polar$(M)$, and whose explicit expression is $U=M(M^*M)^{-1/2}$. Given two matrices $M$ and $N$ we denote by $M\otimes N$ their Kronecker (tensor) product and by $\opvec(M)$ the vector obtained stacking the columns of $M$. We speak of vec basis for $\C^{n\times n}$ as the basis in which the coordinates of a matrix $M$ are $\opvec(M)$, similarly the vec basis for $\C^{n\times n}\times \C^{n\times n}$ is the one in which the coordinates of $(M,N)$ are $\twoone{\opvec(M)}{\opvec(N)}$. Finally, let $f(A)$ be a matrix function, then for any invertible matrix $M$, it holds that
\begin{equation}\label{eq:siminv}
f(MAM^{-1})=Mf(A)M^{-1};
\end{equation}
we call this property {\em similarity invariance} of matrix functions. Beside similarity invariance, we use several other properties of general and specific matrix functions, for this topic we address the reader to the book of Higham \cite{highambook}.

%%%%%%%%%%%%%%%%%%%%%%%%%%%%%%%%%%%%%%%%%%

\section{Some properties of the geometric mean}

Any positive matrix $A$ can be written as $A=C^*C$ for an invertible $C$. Two noticeable examples are $A=A^{1/2}A^{1/2}$ and the Cholesky factorization $A=R^*R$, where $R$ is upper triangular with positive diagonal entries.

Given two positive matrices $A$ and $B$, with factorizations $A=C^*C$ and $B=D^*D$, the matrix geometric mean of $A$ and $B$ can be characterized using the following result which generalizes Proposition 4.1.8 of \cite{bhatia}.

\begin{Prop}\label{thm:fact}
Let $A=C^*C$ and $B=D^*D$ with $C,D\in\C^{n\times n}$ nonsingular. Then 
\begin{equation}\label{eq:fact}
	A\# B=C^*\polar(CD^{-1})D,
\end{equation}
where $\polar(CD^{-1})$ is the unitary polar factor of $CD^{-1}$.
Moreover, let $U$ be a unitary matrix such that $C^*UD>0$, then $C^*UD=A\# B$ and $U=\polar(CD^{-1})$.
\end{Prop}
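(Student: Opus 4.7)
The plan is to verify \eqref{eq:fact} via the Riccati characterization: $A\#B$ is the unique positive solution of $XA^{-1}X = B$, recalled in the introduction. Abbreviating $U := \polar(CD^{-1})$ and $P := (CD^{-1})^*(CD^{-1}) = D^{-*}AD^{-1}$, which is positive because $A$ is, the definition of the polar factor gives $U = CD^{-1}P^{-1/2}$ and hence
\[
X := C^*UD = AD^{-1}P^{-1/2}D.
\]
A short telescoping computation verifies the Riccati equation: the middle block $P^{-1/2}DA^{-1}AD^{-1}P^{-1/2}$ collapses to $P^{-1} = DA^{-1}D^*$, so $XA^{-1}X = AD^{-1}\cdot DA^{-1}D^* \cdot D = D^*D = B$.

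Next I would establish positivity of $X$ by spotting the (somewhat asymmetric-looking) identity $AD^{-1}P^{-1/2}D = D^*P^{1/2}D$, which is the one real step of the proof. It is equivalent to $AD^{-1}P^{-1} = D^*$, and plugging $P^{-1} = DA^{-1}D^*$ makes this immediate. Thus $X = D^*P^{1/2}D$ is a congruence of the positive matrix $P^{1/2}$, and so $X > 0$. Combined with the Riccati equation, this yields $X = A\#B$ and proves \eqref{eq:fact}.

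For the second assertion, let $U$ be unitary with $Y := C^*UD > 0$. Applying the congruence $D^{-*}(\cdot)D^{-1}$, which preserves positivity, gives $D^{-*}YD^{-1} = (CD^{-1})^*U > 0$. Writing $M := CD^{-1}$, the matrix $M^*U$ is Hermitian positive, hence so is its conjugate transpose $U^*M$. Since $M = U\cdot(U^*M)$ with the right factor Hermitian positive, uniqueness of the right polar decomposition of the nonsingular matrix $M$ forces $U = \polar(CD^{-1})$, and then the first part gives $C^*UD = A\#B$. The main (indeed only) cleverness is the identity $AD^{-1}P^{-1/2}D = D^*P^{1/2}D$; the rest is bookkeeping with the definition of the polar factor and the Riccati characterization of the geometric mean.
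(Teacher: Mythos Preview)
Your proof is correct, but it follows a genuinely different route from the paper's. For the identity \eqref{eq:fact}, the paper simply expands $C^*\polar(CD^{-1})D$ using $\polar(M)=M(M^{-1}M^{-*})^{1/2}$ and applies similarity invariance of the square root \eqref{eq:siminv} to arrive directly at $A(A^{-1}B)^{1/2}=A\#B$; this is a two-line computation that never touches the Riccati equation. You instead verify the Riccati equation and then separately establish positivity via the identity $AD^{-1}P^{-1/2}D=D^*P^{1/2}D$, invoking the uniqueness of the positive Riccati solution. The paper's route is shorter; yours has the merit that the positivity of $C^*\polar(CD^{-1})D$ is made completely explicit as a congruence, whereas in the paper it is implicit in the formula $A\#B$. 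For the second assertion the paper only points to a modification of an argument in Bhatia's book, while you supply a clean self-contained proof via uniqueness of the polar decomposition; in that respect your treatment is more complete.
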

\begin{proof}
Using the formula $\polar(M)=M(M^{-1}M^{-*})^{1/2}$ and the similarity invariance of the square root \eqref{eq:siminv} we get
\begin{multline}\nonumber
	C^*\polar(CD^{-1})D=C^*CD^{-1}(DC^{-1}C^{-*}D^*)^{1/2}D\\=
	A(D^{-1}DA^{-1}D^*D)^{1/2}=A(A^{-1}B)^{1/2}=A\# B.
\end{multline}
The second statement can be obtained suitably modifying the proof of Proposition 4.1.8 of \cite{bhatia}.
\end{proof}

Yet another interesting property is obtained using the fact that the principal square root of a matrix $Z\in\C^{n\times n}$ is a polynomial in $Z$ \cite{highambook}. In particular, if $Z$ has real positive eigenvalues, then $Z^{1/2}=p(Z)$ where $p(z)$ is the polynomial interpolating the points $(\la_1,\sqrt{\la_1}),\ldots,(\la_s,\sqrt{\la_s})$, where $\la_1,\ldots,\la_s$ are the distinct eigenvalues of $Z$.

Since $A\# B=A(A^{-1}B)^{1/2}=B(B^{-1}A)^{1/2}$, we get the following result.
\begin{Prop}\label{thm:poly}
Let $A,B \in\mathcal P_n$, and let $\la_1,\ldots,\la_s$ be the distinct eigenvalues of $A^{-1}B$, then $A\# B=Ap(A^{-1}B)=Bq(B^{-1}A)$, where $p(z)$ and $q(z)$ are the interpolating polynomials of the  points $(\la_i,\sqrt{\la_1}),\ldots,(\la_s,\sqrt{\la_s})$ and $(1/\la_i,1/\sqrt{\la_1}),\ldots,(1/\la_s,1/\sqrt{\la_s})$, respectively.
\end{Prop}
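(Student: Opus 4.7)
The plan is to combine the two equivalent expressions $A\#B = A(A^{-1}B)^{1/2} = B(B^{-1}A)^{1/2}$ from equation \eqref{eq:mean2equiv} with the Hermite-interpolation characterization of the principal square root cited just before the statement. The only real work is to verify that the hypotheses of that interpolation result apply, namely that $A^{-1}B$ and $B^{-1}A$ each have positive real eigenvalues.

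First I would handle $A^{-1}B$. Observe the similarity
\[
A^{-1}B = A^{-1/2}\bigl(A^{-1/2} B A^{-1/2}\bigr)A^{1/2},
\]
so $A^{-1}B$ is similar to the Hermitian positive matrix $A^{-1/2}BA^{-1/2}\in\mathcal P_n$; in particular its spectrum consists of positive real numbers and these numbers are precisely $\lambda_1,\ldots,\lambda_s$ with the same multiplicities. By the quoted fact, $(A^{-1}B)^{1/2}=p(A^{-1}B)$ where $p$ is the polynomial interpolating $\sqrt{z}$ at the distinct eigenvalues $\lambda_1,\ldots,\lambda_s$. Multiplying on the left by $A$ and using $A\#B = A(A^{-1}B)^{1/2}$ gives $A\#B = Ap(A^{-1}B)$.

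For the second identity, note that $B^{-1}A = (A^{-1}B)^{-1}$, so its distinct eigenvalues are $1/\lambda_1,\ldots,1/\lambda_s$ (again positive real). Applying the same interpolation fact, $(B^{-1}A)^{1/2} = q(B^{-1}A)$, where $q$ is the polynomial interpolating $\sqrt{z}$ at $1/\lambda_1,\ldots,1/\lambda_s$; since $\sqrt{1/\lambda_i} = 1/\sqrt{\lambda_i}$, the interpolation data are exactly $(1/\lambda_i,1/\sqrt{\lambda_i})$. Now use $A\#B = B(B^{-1}A)^{1/2}$ from \eqref{eq:mean2equiv} to obtain $A\#B = Bq(B^{-1}A)$.

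There is essentially no obstacle: the only subtle point is recognising that, although $A^{-1}B$ is not Hermitian in general, it is similar to a positive matrix, so the principal square root is well defined and is given by a polynomial evaluated at $A^{-1}B$ (via the interpolation formula). Everything else follows by substitution into the two representations of $A\#B$ already established.
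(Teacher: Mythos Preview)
Your proof is correct and follows exactly the approach the paper uses: apply the interpolation characterization of the square root (stated just before the proposition) to the matrices $A^{-1}B$ and $B^{-1}A$, then substitute into the identities $A\#B=A(A^{-1}B)^{1/2}=B(B^{-1}A)^{1/2}$ from \eqref{eq:mean2equiv}. You add the verification, via similarity to $A^{-1/2}BA^{-1/2}$, that $A^{-1}B$ has positive real eigenvalues (and is diagonalizable), which the paper leaves implicit; this is a welcome clarification but not a different argument.
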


Proposition \ref{thm:poly} has some interesting consequences. First of all we get that if $A$ and $B$ are $2\times 2$ matrices then $A\#B=a_0A+a_1B$. An explicit expression of $a_0$ and $a_1$ is well known, in fact \cite[Prop. 4.1.12]{bhatia} 
\[
	A\# B=\frac{ \sqrt{\alpha\beta}}{\sqrt{\det(\alpha^{-1}A+\beta^{-1}B)}}
	(\alpha^{-1}A+\beta^{-1}B),\quad \alpha=\sqrt{\det(A)},\quad
	\beta=\sqrt{\det(B)}.
\]
Similarly, if $A$ and $B$ are such that $A^{-1}B$ has just two eigenvalues then $A\# B=a_0A+a_1B$. 

An application of Proposition \ref{thm:poly} concerns the preservation of matrix structures by the geometric mean. For instance, if $\mathcal G$ is an algebra of matrices, then $A,B\in\mathcal G\cap \mathcal P_n$ implies that $A\# B\in \mathcal G\cap \mathcal P_n$. An example of $\mathcal G\cap \mathcal P_n$ is the set of circulant Hermitian positive definite matrices.

Proposition \ref{thm:poly} holds also for $A\#_t B$ since $(A^{-1}B)^t$ as well is a polynomial in $A^{-1}B$. 
This fact allows us to prove that the Karcher mean of positive definite matrices (see \cite{bi} for the definition) preserves $\mathcal G\cap \mathcal P_n$, where $\mathcal G$ is an algebra of matrices.

\begin{Prop}\label{thm:subgroups}
Let $\mathcal G$ be an algebra of $n\times n$ matrices and let $A_0,\ldots,A_m\in \mathcal G\cap \mathcal P_n$, then the Karcher mean $G$ of $A_0,\ldots,A_m$ belongs to $\mathcal G\cap \mathcal P_n$.
\end{Prop}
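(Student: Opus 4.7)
The plan is to realize the Karcher mean $G$ as the limit of an iterative sequence whose terms all lie in $\mathcal G\cap\mathcal P_n$, and then to invoke the closedness of $\mathcal G$ inside $\C^{n\times n}$ to conclude.

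First I would verify that $\mathcal G\cap\mathcal P_n$ is stable under all the operations entering a standard algorithm for the Karcher mean. Since $A_0\in\mathcal G\cap\mathcal P_n$ is invertible, $\det(A_0)\neq 0$, and the Cayley--Hamilton identity expresses $I$ as a linear combination of the positive powers $A_0,A_0^2,\ldots,A_0^n$; hence $I\in\mathcal G$. Consequently, for any $A\in\mathcal G\cap\mathcal P_n$ the inverse $A^{-1}$, the powers $A^{\pm 1/2}$, and more generally $f(A)$ for any scalar function $f$ regular on the spectrum of $A$, all coincide with a polynomial in $A$ (by Lagrange interpolation at the eigenvalues of $A$) and therefore belong to $\mathcal G$. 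Combined with the extension of Proposition~\ref{thm:poly} to $A\#_t B=A\,p_t(A^{-1}B)$ noted right after its proof, this yields $A\#_t B\in\mathcal G\cap\mathcal P_n$ whenever $A,B\in\mathcal G\cap\mathcal P_n$ and $t\in[0,1]$.

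Next I would appeal to a known iteration for $G$ built out of those operations; a convenient choice is the Riemannian gradient descent studied in \cite{bi},
\begin{equation*}
X_{k+1}=X_k^{1/2}\exp\!\Bigl(\tau\sum_{i=0}^m\log\bigl(X_k^{-1/2}A_iX_k^{-1/2}\bigr)\Bigr)X_k^{1/2},\qquad X_0=A_0,
\end{equation*}
with a step size $\tau>0$ ensuring convergence. Every ingredient appearing in the update ($X_k^{\pm 1/2}$, the conjugations $X_k^{-1/2}A_iX_k^{-1/2}$, their logarithms, the sum, and its matrix exponential) stays in $\mathcal G$ by the previous paragraph, so a straightforward induction gives $X_k\in\mathcal G\cap\mathcal P_n$ for every $k$. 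A purely $\#_t$-based scheme, such as the Lim--P\'alfia inductive mean, would serve equally well and would rely on the previous paragraph alone.

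Finally, $\mathcal G$ is a finite-dimensional linear subspace of $\C^{n\times n}$, hence closed, and the convergence $X_k\to G$ then forces $G\in\mathcal G$; positivity of $G$ places it in $\mathcal G\cap\mathcal P_n$. The only genuinely delicate point will be the first one, the closure of $\mathcal G$ under inversion and continuous functional calculus on its positive definite elements, and Cayley--Hamilton removes it automatically as soon as $\mathcal G\cap\mathcal P_n$ is nonempty.
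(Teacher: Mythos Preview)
Your argument is correct and follows the same template as the paper's: exhibit a sequence in $\mathcal G\cap\mathcal P_n$ converging to the Karcher mean and use that $\mathcal G$ is closed. The paper chooses precisely the $\#_t$-based scheme you mention as your alternative, namely the Holbrook sequence $S_k=S_{k-1}\#_{1/k}A_{(k\bmod m)+1}$, $S_1=A_1$, and then appeals directly to the $\#_t$ version of Proposition~\ref{thm:poly}; your primary choice of the Riemannian gradient descent from \cite{bi} works as well but requires the additional closure of $\mathcal G$ under $\exp$ and $\log$, which you correctly justify. Your explicit Cayley--Hamilton step showing $I\in\mathcal G$ is a detail the paper leaves implicit, and it is genuinely needed for the polynomial-in-$A^{-1}B$ argument to land inside $\mathcal G$.
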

\begin{proof}
The sequence $S_k=S_{k-1}\#_{1/k} A_{(k\mathrm{\ mod\ }m)+1}$, with $S_1=A_1$, converges to the Karcher mean \cite{holbrook}. From Proposition \ref{thm:poly} applied to $A\#_t B$ we get that $S_k$ belongs to $\mathcal G\cap \mathcal P_n$ for each $k$, and since $\mathcal G$ is closed we have that $G\in\mathcal G$. On the other hand, by the definition of Karcher mean $G\in\mathcal P_n$, and thus $G\in\mathcal G\cap \mathcal P_n$.
\end{proof}

%%%%%%%%%%%%%%%%%%%%%%%%%%%%%%%%%%%%%%%%%%%%%%%%%%%%%%%%%%%%%%%%%%%%%%%%%

\section{Conditioning}\label{sec:cond}

We describe the sensitivity of the matrix geometric mean function
$\varphi:\mathcal P_n\times \mathcal P_n \to \mathcal P_n\,:\, (A,B)\to A\# B$ to 
perturbations in both its arguments, $A$ and $B$. 
For any couple of positive matrices $(A,B)$ there exists a neighborhood $\mathcal U\subseteq \C^{n\times n}\times \C^{n\times n}$ of it in which the function $\varphi$ can be extended to a differentiable function $\wvp$ with the same formula $\wvp(X,Y)= X(X^{-1}Y)^{1/2}$, for $X,Y \in\mathcal U$. The
differential (Fr\'echet derivative) at a point $(A,B)$ is a linear function
$d\wvp_{(A,B)}:\C^{n\times n}\times \C^{n\times n}\to \C^{n\times n}$.

A measure of the sensitivity is given by the relative condition number
whose expression, following Rice \cite[Thm. 4]{rice}, 
is
\[
\mathrm{cond}(\wvp,(A,B))=\frac{\|(A,B)\|
\|d\wvp_{(A,B)}\|}{\|\wvp(A,B)\|}
\]
where the norm of the couple $(A,B)$ is the norm of the matrix $[A\ B]$ and the norm of the operator $d\wvp_{(A,B)}$ is defined in the
usual sense by
\[
\|d\wvp_{(A,B)}\|=\max_{H,K\mathrm{\ not\ both\ zero}}
\frac{\|d\wvp_{(A,B)}[(H,K)]\|}{\|(H,K)\|}.
\]

To give an explicit expression of the condition number from which
deduce suitable bounds, we need to
compute the differential of $\wvp$ at a couple $(A,B)$ . It may be useful the following expression of the
differential of the matrix mean function (extended in a neighborhood of $(A,B)$).
\begin{theorem}\label{thm:dermgm}
Let $A,B\in\mathcal P_n$ and $H,K\in \C^{n\times n}$, and let $\wvp$ be the extension of the matrix mean function in a neighborhood of $(A,B)$ in $\C^{n\times n}\times \C^{n\times n}$, then the following representation of
$D=d\wvp_{(A,B)}[(H,K)]\in \C^{n\times n}$ holds
\[
\opvec(D)=(I\otimes Z^{-1}+Z^{-1}\otimes I)^{-1}\opvec(H)
+(I\otimes Z+Z\otimes I)^{-1}\opvec(K),
\]
where $Z=(BA^{-1})^{1/2}$.
\end{theorem}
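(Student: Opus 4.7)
\emph{Proof plan.} I would reduce to the base case $A=I$ by congruence invariance, compute the two partial Fr\'echet derivatives there using the standard matrix square root formula together with the Riccati identity $\wvp(X,Y)X^{-1}\wvp(X,Y)=Y$, and then transport the result back to $(A,B)$ via Kronecker identities.

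By congruence, $\wvp(X,Y)=A^{1/2}\wvp(A^{-1/2}XA^{-1/2},A^{-1/2}YA^{-1/2})A^{1/2}$, so differentiating yields $d\wvp_{(A,B)}[(H,K)]=A^{1/2}\,d\wvp_{(I,\wt B)}[(\wt H,\wt K)]\,A^{1/2}$, with $\wt B=A^{-1/2}BA^{-1/2}$ and tildes on $H,K$ denoting the same congruence. At $A=I$ we have $\wvp(I,Y)=Y^{1/2}$, so the $K$-derivative is the standard Fr\'echet derivative of the matrix square root: writing $\wt Z:=\wt B^{1/2}$, it is the solution $D_K$ of the Sylvester equation $\wt Z D_K + D_K \wt Z = \wt K$. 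The $H$-derivative I would extract by differentiating the Riccati identity at $(I,\wt B)$, which gives $\wt Z D_H + D_H \wt Z = \wt Z H \wt Z$; multiplying by $\wt Z^{-1}$ on both sides converts this into the dual form $\wt Z^{-1} D_H + D_H \wt Z^{-1} = H$.

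Applying $\opvec$ via $\opvec(AXB)=(B^T\otimes A)\opvec(X)$ to the two Sylvester equations, and using that $\wt Z$ is Hermitian positive, produces the claimed Kronecker-inverse formulas at $A=I$. To transport back to general $(A,B)$, I would use $Z=A^{1/2}\wt Z A^{-1/2}$, which holds because both sides square to $BA^{-1}$, together with the outer congruence factors $A^{1/2}\otimes A^{1/2}$ arising both from $H\mapsto\wt H$ and from the outer $A^{1/2}$-conjugation. The commutativity $(M\otimes I)(I\otimes N)=(I\otimes N)(M\otimes I)$ and the rule $(M\otimes N)^{-1}=M^{-1}\otimes N^{-1}$ then allow these factors to conspire, converting $\wt Z$ into $Z$ inside each Kronecker sum. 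The main obstacle is exactly this bookkeeping step: keeping track of conjugations and the order of Kronecker factors so that the outer congruences cancel cleanly and only the advertised form $(I\otimes Z^{\pm 1}+Z^{\pm 1}\otimes I)^{-1}$ remains on the right-hand side.
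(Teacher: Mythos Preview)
Your approach is correct and takes a genuinely different route from the paper. The paper does not reduce to $A=I$; it works directly with the representation $\wvp(X,Y)=(YX^{-1})^{1/2}X$, obtains the $K$-partial via the chain rule together with the known vec formula for the Fr\'echet derivative of the square root, and then gets the $H$-partial for free by the symmetry $A\#B=B\#A$ (simply swapping the roles of $A$ and $B$). The only Kronecker bookkeeping is absorbing a stray $A\otimes I$ into the Kronecker sum through the identity $AZ^{T}A^{-1}=Z$.

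Your route---congruence reduction to $(I,\wt B)$, the Sylvester equation for the $K$-part, and the differentiated Riccati identity $\wvp X^{-1}\wvp=Y$ for the $H$-part---is equally valid and arguably more structural, but it moves all the work into the transport-back step, which you rightly flag as the delicate point. The paper's $A\leftrightarrow B$ swap is slicker for producing the $H$-term with no extra computation; your Riccati argument has the advantage of not relying on that symmetry and would survive in settings where such a swap is unavailable. One small slip: in your Riccati step the right-hand sides should read $\wt Z\,\wt H\,\wt Z$ and then $\wt H$, not $H$; the untilded $H$ only reappears after you undo the congruence.
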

\begin{proof}
It is enough to find the ``partial'' derivative of $\wvp$ with
respect to a perturbation on $B$, say $K$, then interchanging $A$ and $B$
yields the full result.

Let $f(z)=z^{1/2}$,  recall that for any matrix with real positive eigenvalues $S\in\C^{n\times n}$ and any matrix direction $F\in\C^{n \times n}$, it holds that 
$\opvec(df_S[F])=(I\otimes S^{1/2}+(S^{1/2})^T\otimes I)^{-1}\opvec(F)$ \cite[Chap. 6]{highambook}.

Since, by the similarity invariance of the square root \eqref{eq:siminv}, $\wvp(X,Y)=X(X^{-1}Y)^{1/2}=
X(X^{-1}YX^{-1}X)^{1/2}=(YX^{-1})^{1/2}X$, for any $(X,Y)$ in a neighborhood of $(A,B)$, we have by the chain rule
\[
     d\wvp_{(A,B)}[0,K] =df_{BA^{-1}}[KA^{-1}]A
\]
which in the $\opvec$ basis can be written as
\[
\begin{split}
	\opvec(d\wvp_{(A,B)}[0,K])
& =(A\otimes I)(I\otimes Z+Z^T \otimes I)^{-1}\opvec(KA^{-1})\\
 &=(A\otimes I)(I\otimes Z+Z^T\otimes I)^{-1}(A^{-1}\otimes I)\opvec(K)\\
& =(I\otimes Z+AZ^TA^{-1}\otimes I)^{-1}\opvec(K)\\
& =(I\otimes Z+Z\otimes I)^{-1}\opvec(K),
\end{split}
\]
where we have used the fact that $AZ^TA^{-1}=A(A^{-1}B)^{1/2}A^{-1}=(BA^{-1})^{1/2}AA^{-1}=Z$. 
\end{proof}

Using Theorem \ref{thm:dermgm} and setting $M_1=(I\otimes
Z^{-1}+Z^{-1}\otimes I)^{-1}$ and $M_2=(I\otimes Z+Z\otimes I)^{-1}$ it is
possible to get the expression for the (relative) condition number in the
Euclidean (Frobenius) norm
\[
\mathrm{cond}(\wvp,(A,B))=\frac{
\|[M_1\ M_2]\|_2\|[A\ B]\|_F }
{\|
A\#B\|_F}.
\]
We have used the fact that the operator norm induced by the Euclidean norm coincides with the matrix 2-norm (spectral norm) of the matrix representation of the operator in the vec basis since
\[
\sup\frac{\|d\wvp_{(A,B)}[H,K]\|_F}{\|[H\ K]\|_F}
=\sup\frac{\|[M_1\ M_2]\opvec([H\ K])\|_2}{\|\opvec([H\ K])\|_2}=\|[M_1\ M_2]\|_2.
\]
The absolute condition number is $\kappa(A,B):=\|[M_1\ M_2]\|_2$.

{}From the  properties of the spectral norm, the following inequalities hold
\begin{equation}\label{eq:condbound}
\max\{\|M_1\|_2,\|M_2\|_2\}\le \|[M_1\ M_2]\|_2
\le (\|M_1\|_2^2+\|M_2\|_2^2)^{1/2}.
\end{equation}
To get bounds for the condition number, observe that there exists $K$
such that $D=K^{-1}ZK$ is diagonal and $K\otimes K$ diagonalizes both
$M_1$ and $M_2$. Thus, using \eqref{eq:condbound}, we get the bounds for the condition number in the Euclidean norm, which we denote by $\kappa_F(A,B)$,
\begin{equation}\label{eq:bounds}
\frac 12 \max\bigl\{\rho(Z),\rho(Z^{-1})\bigr\}
\le
\kappa_F(A,B)
\le
\frac 12\,\mu_2(K\otimes K)\bigl(\rho(Z)^2+\rho(Z^{-1})^2\bigr)^{1/2}.
\end{equation}

Let $U$ be a unitary matrix which diagonalizes $M=B^{-1/2}AB^{-1/2}$, that is $U^*MU$ is a diagonal matrix, then the matrix $V=B^{-1/2}U$ diagonalizes $B^{-1}A$ and thus diagonalizes $Z$ and $Z^{-1}$. Moreover, $\mu_2(V)=\mu_2(B^{-1/2})=\mu_2(B)^{1/2}$. We get an upper bound for $\mu_2(K\otimes K)$ as $\mu_2(V\otimes V)=\mu_2(B)$ and interchanging $A$ and $B$ we get a less sharp but better understandable upper bound for the condition number
\begin{equation}\label{eq:bounds2}
\kappa_F(A,B)
\le
\frac 12\,\min\{\mu_2(A),\mu_2(B)\}\sqrt{\rho(B^{-1}A)+\rho(A^{-1}B)}.
\end{equation}

Given $A$ and $B$ we can possibly reduce the bounds in
\eqref{eq:bounds} by a simple scaling of the matrices $A$ and $B$ by positive parameters $\alpha$ and $\beta$, getting the new matrices $\wt A=\alpha A$, $\wt B=\beta B$
and $\wt Z=\sqrt{\beta/\alpha}\,Z$. From $\wt A\#\wt B$ we obtain the required
geometric mean through $A\# B=\frac 1{\sqrt{\alpha\beta}}((\alpha A)\#(\beta B))$.

The choices of $\alpha$ and $\beta$ which minimize both $\sqrt{ \rho(\wt Z)^2+\rho(\wt Z^{-1})^2}$ and $\max\{\rho(\wt Z),\rho(\wt Z^{-1})\}$
are such that $\alpha/\beta=\rho(Z)/\rho(Z^{-1})=mM$ where $m$ and $M$
are the extreme eigenvalues of $Z$.  An approximate value of $\alpha/\beta$ can  be obtained by the approximations of $M^2$ and $m^2$ got by some
steps of the power and inverse power methods applied to $B^{-1}A$ (or $A^{-1}B$).

%%%%%%%%%%%%%%%%%%%%%%%%%%%%%%%%%%%%%%%%%%%%%%%%%%%%%%%%%%%%%%%%%%%%%%%%

\section{An algorithms based on the Schur decomposition}\label{sec:schur}

We explain how to efficiently compute a point of the geodesic $A\#_t B$ using the Schur decomposition and the Cholesky factorization. The resulting algorithm can be used to compute the matrix geometric mean for $t=1/2$.

Consider the Cholesky factorizations $A=R_A^* R_A$ and $B=R_B^*R_B$. Using the similarity invariance of the matrix functions we get
\begin{equation}\label{eq:sharpchol}
	A\#_t B=A(A^{-1}B)^t=R_A^*R_A(R_A^{-1}R_A^{-*}BR_A^{-1}R_A)^t
	=R_A^*(R_A^{-*}BR_A^{-1})^tR_A,
\end{equation}
and thus, the evaluation of $A\#_t B$ can be obtained by forming the Cholesky decomposition of $A$, inverting the Cholesky factor $R_A$ (whose condition number is the square root of the one of $A$) and computing the $t$-th power of the positive definite matrix $V=R_A^{-*}BR_A^{-1}$. This is done by computing the Schur form $V=UDU^*$ and getting
\begin{equation}\label{eq:schur2}
	A\#_t B=R_A^*UD^tU^*R_A,\qquad R_A^{-*}BR_A^{-1}=UDU^*,
\end{equation}
The power of $D$ is computed elementwise.

If the condition number of $A$ is greater than the one of $B$, it may be convenient to interchange $A$ and $B$ in order to get a possibly more accurate results. Using the simple equality $A\#_tB=B\#_{1-t}A$, the formula is
\begin{equation}\label{eq:schur3}
	A\#_t B=B\#_{1-t} A=R_B^*UD^{1-t}U^*R_B,\qquad R_B^{-*}AR_B^{-1}=UDU^*.
\end{equation}

We synthesize the procedure.

\bigskip

{\noindent \bf Algorithm 4.1 (Cholesky-Schur method)} Given $A$ and $B$ positive definite matrices, $t\in(0,1)$, compute $A\#_t B$. 
\begin{enumerate}
\item if the condition number of $A$ is greater than the condition number of $B$ interchange $A$ and $B$, computing $B\#_{1-t}A$;
\item compute the Cholesky factorizations $A=R_A^*R_A$, $B=R_B^*R_B$ and form $V=R_A^{-*}BR_A^{-1}=X^*X$ where $X$ is the upper triangular matrix solving $XR_B=R_A$; 
\item compute the Schur decomposition $UDU^*=V$;
\item compute $A\#_t B=R_B^*UD^t U^*R_B$.
\end{enumerate}

\bigskip

The computational cost of the procedure is given by the Cholesky factorizations ($\frac 23n^3$ arithmetic operations (ops)), the computation of $V$  ($n^3$ ops), the Schur decomposition (about $9n^3$ ops), the computation of $R_B^*UD^t U^*R_B$ ($3n^3$ ops), for a total cost of about $(14+\frac 23)n^3$ ops.

All the steps of Algorithm 4.1 can be performed in a stable way, thus the resulting algorithm is numerically stable.

\begin{Rem}\rm
An alternative to compute $A\#_t B$ is to use directly one of the formulae 
\begin{equation}\label{eq:schur}
\begin{split}
A\#_t B & =A(A^{-1}B)^t=B(B^{-1}A)^{1-t}\\
&=A\exp(t\log(A^{-1}B))=A\exp(-t\log(B^{-1}A)).
\end{split}
\end{equation}
The expressions in the first row of \eqref{eq:schur} can be evaluated either by forming the Schur decomposition of the matrix $A^{-1}B$ (or $B^{-1}A$) which is nonnormal in the generic case or using the approximation algorithm of Higham and Lin \cite{higlin11}. Alternatively one could use the expressions in the second row of \eqref{eq:schur} where the exponential and the logarithm can be computed as explained in \cite{highambook}.
Unfortunately, none of these alternatives is of interest since they are more expensive than the Cholesky-Schur algorithm and do not exploit the positive definite structure of $A$, $B$ and $A\#_t B$.
\end{Rem}

%%%%%%%%%%%%%%%%%%%%%%%%%%%%%%%%%%%%%%%%%%%%%%%%%%%%%%%%%%%%%%%%%%%%%%%%

\section{Algorithms based on the Pad\'e approximation of $z^{-1/2}$}\label{sec:pade}

We give three methods (with variants) for computing the matrix geometric mean, based on matrix iterations or a quadrature formula, two of them are apparently new.
The algorithms are derived using different properties of the
matrix geometric mean, however, perhaps surprisingly, they give
essentially the same sequences which can be also derived using certain Pad\'e approximation of $z^{-1/2}$ in the formula $A(B^{-1}A)^{-1/2}$.

The first method is based on the simple property
that iterating two means one obtains a new mean: the geometric
mean is obtained as the limit of an iterated arithmetic-harmonic
mean. The second is based on the polar decomposition and if the
robustness is the main concern it is possible to compute it in 
a backward stable way \cite{kz,hn}. The latter is based on an
integral representation of the matrix geometric mean computed with
a Gauss-Chebyshev quadrature, that method could be useful if one is
interested in the computation of $(A\#B) v$, for $A$ and $B$ large
and sparse.

\subsection{Scaled averaging iteration}\label{sec:avera}

Let $a$ and $b$ be two positive integers, their geometric mean
$\sqrt{ab}$ can be obtained as the limit of the sequences
$a_{k+1}=(a_k+b_k)/2$, $b_{k+1}=2a_kb_k/(a_k+b_k)$ with $a_0=a$
and $b_0=b$. The updated values $a_{k+1}$ and $b_{k+1}$ are the
arithmetic and the harmonic mean, respectively, of $a_k$ and $b_k$.

This ``averaging technique'' can be applied also to matrices
leading to the first, as far as we know, algorithm for computing $A\# B$ provided by
Anderson, Morley and Trapp \cite{amt} and based on the coupled
iterations
\begin{equation}\label{eq:2nd1}
    \left\{
    \begin{array}{l}
    A_0=A,\quad B_0=B,\\
    A_{k+1}=(A_k+B_k)/2,\\
    B_{k+1}=2A_k(A_k+B_k)^{-1}B_k=2(A_k^{-1}+B_k^{-1})^{-1},
    \end{array}
    \right. \quad k=0,1,2,\ldots,
\end{equation}
where $A_k$ and $B_k$, for $k=1,2,\ldots$, both converge to $A\# B$.
Observe that $A_{k+1}$ is the arithmetic mean of $A_k$ and $B_k$,
while $B_{k+1}$ is the harmonic mean of $A_k$ and $B_k$.

The convergence is monotonic in fact it can be proved that $A_k\ge
A_{k+1}\ge A\#B \ge B_{k+1}\ge B_k$ for $k=1,2,\ldots$ (see
\cite{at}), where we say that $P_1\ge P_2$ if $P_1-P_2$ is semidefinite positive.  

The sequences $A_k$ and $B_k$ are related by the simple formulae
$A_k=AB_k^{-1}B=BB_k^{-1}A$ (or equivalently
$B_k=AA_k^{-1}B=BA_k^{-1}A$), which are trivial for $k=0$ and,
assuming them true for $k$, then, the equality
$B_{k+1}^{-1}=(A_k^{-1}+B_k^{-1})/2$ yields
\[
\begin{split}
A_{k+1}=\frac 12(A_k+B_k) & = \frac 12
(AB_k^{-1}B+AA_k^{-1}B)=AB_{k+1}^{-1}B,\\
& = \frac 12 (BB_k^{-1}A+BA_k^{-1}A)=BB_{k+1}^{-1}A,
\end{split}
\]
hence, the formulae are proved by an induction argument.

Using the previous relationships, iteration \eqref{eq:2nd1} can be
uncoupled obtaining the single iterations
\begin{equation}\label{eq:2nd1b1}
    A_0=A \mathrm{\ (or\ }B),\quad A_{k+1}=\frac 12 (A_k+AA_k^{-1}B),
    \quad k=0,1,2,\ldots,
\end{equation}
and
\begin{equation}\label{eq:2nd1b2}
    B_0=A \mathrm{\ (or\ }B),\quad B_{k+1}=2(B_k^{-1}+B^{-1}B_kA^{-1})^{-1},
    \quad k=0,1,2,\ldots
\end{equation}
Iteration \eqref{eq:2nd1b1} has the same computational cost as
\eqref{eq:2nd1}, and seem to be more attractive from a
computational point of view since requires less storage.
However, iterations \eqref{eq:2nd1b1} and \eqref{eq:2nd1b2} are
prone to numerical instability than \eqref{eq:2nd1} as
we will show in Section \ref{sec:test}.

Yet another elegant way to write the averaging iteration is obtained observing that 
\[
B_{k+1}=2A_k(A_k+B_k)^{-1}(A_k+B_k-A_k)=
2A_k-2A_k(A_k+B_k)^{-1}A_k=2A_k-A_kA_{k+1}^{-1}A_k,
\]
which yields the three-terms recurrence
\begin{equation}\label{eq:3terms}
    \left\{
    \begin{array}{l}
    A_0=A,\quad A_1=(A+B)/2,\\
    A_{k+2}=\ds\frac 12 (A_{k+1}+2A_k-A_kA_{k+1}^{-1}A_k),
    \end{array}
    \right.\quad k=0,1,2,\ldots
\end{equation}

Essentially, the same algorithm is obtained applying Newton's
method for computing the sign of a matrix in the following
equality proved by Higham et al. \cite{hmmt05}:
\begin{equation}\label{eq:signBA}
    \mbox{sign}(C)=\twotwo 0{A\#
    B}{(A \#B)^{-1}}0,\quad C:=\twotwo 0B{A^{-1}}0.
\end{equation}
The sign of a matrix $M$ having nonimaginary eigenvalues can be
defined as the limit of the iteration $M_0=M$,
$M_{k+1}=(M_k+M_k^{-1})/2$. Applying the latter iteration to the matrix $C$ of \eqref{eq:signBA} yields a sequence $C_k=\twotwo 0{X_k}{Y_k}0$ and the
coupled iterations
\begin{equation}\label{eq:2nd2}
    \left\{
    \begin{array}{l}
    X_0=B,\quad Y_0=A^{-1},\\
    X_{k+1}=(X_k+Y_k^{-1})/2,\\
    Y_{k+1}=(Y_k+X_k^{-1})/2,
    \end{array}
    \right. \quad k=1,2,\ldots
\end{equation}
where $X_k$ converges to $A\# B$ and $Y_k$ converges to $(A\#
B)^{-1}$.

We prove by induction that the sequences \eqref{eq:2nd1} and
\eqref{eq:2nd2} are such that $X_k=A_k$, $Y_k=B_k^{-1}$, for
$k=1,2,\ldots$ In fact $X_1=(B+A)/2=A_1$,
$Y_1=(A^{-1}+B^{-1})/2=B_1^{-1}$, while $X_{k+1}=
(X_k+Y_k^{-1})/2=(A_k+B_k)/2= A_{k+1}$ and $Y_{k+1}=
(Y_k+X_k^{-1})/2= (A_k^{-1}+B_k^{-1})/2=B_{k+1}^{-1}$.
 
Iteration \eqref{eq:2nd1} based on averaging can be implemented
at the cost per step of three inversion of positive matrices, that is 
$3n^3$ ops, while iteration \eqref{eq:2nd2} based on the
sign function can be implemented at a cost of $2n^3$ ops. Moreover, the scaling
technique for the sign function allows one to accelerate the
convergence. Let $M$ be a matrix such that the sign is well defined, from sign$(M)=$sign$(\gamma M)$ for each $\gamma>0$,
one obtains the scaled sign iteration which is $M_0=\gamma_0 M$,
$M_{k+1} = (\gamma_k M_k +(\gamma_k M_k)^{-1})/2$, where
$\gamma_k$ is a suitable positive number which possibly reduces
the number of steps needed for the required accuracy. A common
choice is the determinantal scaling  $\gamma_k=|\det(M_k)|^{-1/n}$ \cite{byers}, a
quantity that can be computed in an inexpensive way during the
inversion of $M_k$. Another possibility is to use the spectral scaling $\gamma_k=\sqrt{\rho(M_k^{-1})/\rho(M_k)}$ \cite{kl92}, which is interesting in our case since the eigenvalues of $C=\twotwo 0{B}{A^{-1}}0$ are all real and simple (in fact $C^2=\twotwo{BA^{-1}}00{A^{-1}B}$ has only real positive simple eigenvalues) and in this case a theorem of Barraud \cite{barraud,highambook} guarantees the convergence to the exact value of the sign in a number of steps equal to the number of distinct eigenvalues of the matrix.

To get the proper values of the scaling parameters it is enough to observe that $|\det(C_k)|=|\det(X_k)\det(Y_k)|$ and thus for the determinantal scaling $\gamma_k=|\det(X_k)\det(Y_k)|^{-1/(2n)}$, while $\rho(C_k)=\sqrt{\rho(X_kY_k)}$ and thus for the spectral scaling $\gamma_k=\sqrt{ \rho((X_kY_k)^{-1})/\rho(X_kY_k)  }$.

A scaled sign iteration is thus obtained.
\bigskip

{\noindent \bf Algorithm 5.1a (Scaled averaging iteration: sign
based)} Given $A$ and $B$ positive definite matrices. The matrix $A\# B$ is the limit of the matrix iteration
\begin{equation}\label{eq:2nd3}
    \left\{
    \begin{array}{l}
    X_0=B,\quad Y_0=A^{-1},\\
    \gamma_k=\sqrt{ \rho((X_kY_k)^{-1})/\rho(X_kY_k)  } \  (\mbox{or}\ \gamma_k=|\det(X_k)\det(Y_k)|^{-1/(2n)})\\
    X_{k+1}=(\gamma_kX_k+(\gamma_kY_k)^{-1})/2,\\
    Y_{k+1}=(\gamma_kY_k+(\gamma_kX_k)^{-1})/2,
    \end{array}
    \right. \quad k=0,1,2,\ldots
\end{equation}

\bigskip

Using the aforementioned connections between the sign iterates and the
averaging algorithm the scaling can be applied to the latter obtaining
the following three-terms scaled algorithm.

\bigskip

{\noindent \bf Algorithm 5.1b (Scaled averaging iteration:
  three-terms)} Given $A$ and $B$ positive definite matrices. The
matrix $A\# B$ is the limit of the matrix iteration
\begin{equation}\label{eq:3termsscaled}
  \left\{\begin{array}{l}
  \g_k=\left|\displaystyle\frac{\det(A_k)^2}{
  \det(A)\det(B)}\right|^{-1/(2n)},\\[2ex]  A_{0}=A,\quad A_1=
  \displaystyle\frac {\g_1}{2}(\g_0A +B/\g_0),\\[2ex]
  A_{k+2}=\displaystyle\frac{\g_{k+2}}2
  (A_{k+1}+2A_{k}/\g_{k+1}-A_{k}A_{k+1}^{-1}A_{k}),
  \end{array}\right. \quad k=0,1,2\ldots,
\end{equation}

The same sequence is obtained considering the Palindromic Cyclic
Reduction (PCR)
\begin{equation}\label{eq:PCR}
    \left\{\begin{array}{l}
    P_0=\frac 14 (A-B),\quad Q_0=\frac 12 (A+B),\\
    P_{k+1}=-P_kQ_k^{-1}P_k,\\
    Q_{k+1}=Q_k-2P_kQ_k^{-1}P_k,\\
    \end{array}
    \right. \quad k=0,1,2,\ldots
\end{equation}
whose limits are $\lim_{k} Q_k=A\# B$ and $\lim_{k} P_k=0$. This convergence result is rooted on the fact the matrix Laurent polynomial
\begin{equation}\nonumber
\mathcal L(z)=\frac 14(A^{-1}-B^{-1})z^{-1}+\frac
12(A^{-1}+B^{-1})+\frac 14(A^{-1}-B^{-1}) z,
\end{equation}
is invertible in an annulus containing the unit circle and the sequence $Q_k$ of the PCR converges to the central coefficient of its inverse, namely $A\#B$ \cite{im:palin}.

Since the PCR verifies the same three-terms recurrence \eqref{eq:3termsscaled} as
the averaging iteration \cite{im:pcr}, one obtains that $Q_k=A_{k+1}$ and thus $P_k=(A_k-B_k)/4$.

The connection with PCR is useful because allows one to describe more precisely the quadratic convergence of the averaging technique, as stated by the following theorem of Iannazzo and Meini \cite{im:pcr}.
\begin{theorem}\label{thm:avconv}
Let $A$ and $B$ be positive definite matrices, then the PCR sequence $Q_k$ of \eqref{eq:PCR} (and thus the sequence $A_k$ obtained by the averaging iteration \eqref{eq:2nd1}) converges to $A\#B$ and
$\|Q_k-A\#B\|=O(\xi^{2^k})$, where $\xi$ is any real number such that $\rho^2<\xi<1$ with, $\rho=\sigma/(1+\sqrt{1-\sigma^2})$, where $\sigma=\max_{\la\in\sigma(A^{-1}B)}\{|(\la-1)/(\la+1)|\}$.
\end{theorem}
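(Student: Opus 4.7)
The plan is to reduce the convergence of $Q_k$ to the well-understood quadratic convergence of the Newton iteration for the matrix sign function, and then carefully track the rate through the Cayley transform.

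First I would unwind the identifications already established in the paper: the iterates $A_k$ of the averaging recurrence \eqref{eq:2nd1} coincide with the top-right block $X_k$ of the Newton sign iteration $C_{k+1}=(C_k+C_k^{-1})/2$ starting from the block matrix $C$ in \eqref{eq:signBA}, and the three-term recurrence gives $Q_k=A_{k+1}=X_{k+1}$. Therefore it suffices to estimate $\|X_k-A\#B\|$ and then shift the index by one to obtain the $\rho^2$ (as opposed to $\rho$) behaviour announced in the theorem.

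Next I would invoke the classical theory of the Newton sign iteration via the Cayley transform. Since $A^{-1}B$ is similar to the positive definite matrix $B^{1/2}A^{-1}B^{1/2}$, its spectrum $\{\lambda_i\}$ is real and positive; consequently $C^2$ is similar to $\mathrm{diag}(A^{-1}B,BA^{-1})$ and the spectrum of $C$ is $\{\pm\sqrt{\lambda_i}\}$. Setting $\Phi_k=(C_k-S)(C_k+S)^{-1}$ with $S=\mathrm{sign}(C)$, a direct computation shows $\Phi_{k+1}=\Phi_k^2$, whence $\Phi_k=\Phi_0^{2^k}$. Diagonalizing $C$ and using that the Möbius map $z\mapsto(z-1)/(z+1)$ takes $\pm\sqrt{\lambda_i}$ to $\pm(\sqrt{\lambda_i}-1)/(\sqrt{\lambda_i}+1)$, one gets $\rho(\Phi_0)=\max_i|(\sqrt{\lambda_i}-1)/(\sqrt{\lambda_i}+1)|$, and a standard spectral radius argument gives $\|\Phi_k\|\le c\,(\rho(\Phi_0)+\varepsilon)^{2^k}$ for any $\varepsilon>0$. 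Since $C_k-S=2S\Phi_k(I+\Phi_k)^{-1}$ near convergence, this bound transfers to $\|C_k-S\|$ and hence to $\|X_k-A\#B\|$.

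Finally I would verify the algebraic identity connecting $\rho(\Phi_0)$ with the quantity $\rho$ appearing in the statement. Setting $\sigma_i=(\lambda_i-1)/(\lambda_i+1)$ and $r_i=(\sqrt{\lambda_i}-1)/(\sqrt{\lambda_i}+1)$, a short manipulation gives $r_i=(1-\sqrt{1-\sigma_i^2})/\sigma_i = \sigma_i/(1+\sqrt{1-\sigma_i^2})$; since this last expression is monotone in $|\sigma_i|$, taking the maximum over $i$ produces exactly $\rho=\sigma/(1+\sqrt{1-\sigma^2})$. Combining with the index shift $Q_k=X_{k+1}$ yields $\|Q_k-A\#B\|\le c\,(\rho+\varepsilon)^{2^{k+1}}=c\,((\rho+\varepsilon)^2)^{2^k}$, which is $O(\xi^{2^k})$ for every $\xi\in(\rho^2,1)$. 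The main technical obstacle is the passage from spectral radius to norm bounds on $\Phi_k$, since $C$ is nonnormal: one handles it either by exhibiting an explicit diagonalization of $C$ (whose condition number enters the implicit constant but is absorbed into the $\varepsilon$ slack) or, more cleanly, by observing that $C^2$ is similar to a positive definite matrix and working with the induced spectral norm there.
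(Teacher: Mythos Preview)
The paper does not actually prove this theorem: it is quoted from Iannazzo and Meini \cite{im:pcr} and stated without argument. So there is no in-paper proof to compare against; the cited source establishes the rate via the PCR/Laurent-polynomial machinery rather than the sign-function route you take.

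Your argument is correct and is a clean alternative that leverages the sign-function identification \eqref{eq:signBA} already set up in the paper. The key steps all check out: the eigenvalues of $C$ are $\pm\sqrt{\lambda_i}$ with $\lambda_i\in\sigma(A^{-1}B)$; since $C_k$ and $S=\mathrm{sign}(C)$ are both rational functions of $C$ they commute, so the Cayley iterate $\Phi_k=(C_k-S)(C_k+S)^{-1}$ satisfies $\Phi_{k+1}=\Phi_k^2$; and the algebraic identity $r_i=\sigma_i/(1+\sqrt{1-\sigma_i^2})$ together with the oddness and monotonicity of $t\mapsto t/(1+\sqrt{1-t^2})$ on $(-1,1)$ gives $\rho(\Phi_0)=\rho$ exactly. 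The Gelfand/diagonalization step handles the nonnormality of $C$ at the cost of the $\varepsilon$ slack, which is precisely what the open interval $\rho^2<\xi<1$ in the statement allows. The index shift $Q_k=A_{k+1}=X_{k+1}$ then turns the $\rho$-rate for $X_k$ into the $\rho^2$-rate for $Q_k$.

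One minor correction: the recovery formula should read $C_k-S=2S\Phi_k(I-\Phi_k)^{-1}$, not $(I+\Phi_k)^{-1}$; since $\Phi_k\to 0$ this does not affect the asymptotics. Compared with the PCR-based proof in \cite{im:pcr}, your approach is arguably more transparent here because all the ingredients (the block sign formula, the equality $X_k=A_k$, the relation $Q_k=A_{k+1}$) are already proved in the surrounding text, whereas the PCR analysis requires the separate Laurent-inversion theory.
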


\subsection{Pad\'e approximants to $z^{-1/2}$}

We give another interpretation of the sequences obtained 
by the averaging technique in terms of the Pad\'e appoximants of the function $z^{-1/2}$. To this end, we manipulate the sequence $A_k$ of \eqref{eq:2nd1} showing its connection with Newton's method for the matrix square root and with the matrix sign iteration.

Let $S=A^{-1}B$ and consider the (simplified) Newton method for the square root of $S$, namely
\begin{equation}\label{eq:averanewton}
	\wh A_0=I,\quad \wh A_{k+1}=\frac 12(\wh A_k+\wh A_k^{-1}S).
\end{equation}
The sequence $\wh A_k$ converges to $S^{1/2}$ for any $A$ and $B$, since the eigenvalues of $S$ are real and positive \cite[Thm.  6.9]{highambook}. We claim that $\wh A_k=A^{-1}A_k$, where $A_k$ is one of the two sequences obtained by the averaging iteration.
To prove this fact, a simple induction is sufficient, in fact assuming that $A_k=A\wh A_k$, we have
\[
	A\wh A_{k+1}=\frac 12 (A\wh A_k+A\wh A_k^{-1}S)=
	\frac 12(A_k+AA_k^{-1}AA^{-1}B)=A_{k+1},
\]
in virtue of \eqref{eq:2nd1b1}.

It is well known that Newton's method for the square root of the matrix $S$ \eqref{eq:averanewton} is related to the matrix sign iteration
\[
	Z_{k+1}=\frac 12(Z_k+Z_k^{-1}),\quad Z_0=S^{-1/2},
\]
through the equality $Z_k=S^{-1/2}\wh A_k$ \cite{highambook}, and thus we have that \begin{equation}\label{eq:averasign}
	A_k=A\wh A_k=AS^{1/2}Z_k=(A\# B)Z_k.
\end{equation}

The latter relation allows one to relate the averaging iteration to the Pad\'e approximants to the function $t^{-1/2}$ in a neighborhood of $1$. We use the reciprocal Pad\'e iteration functions defined in \cite{gip} as
\[
\varphi_{2m,2n+1}(z)=\frac{Q_{n,m}(1-z^2)}{zP_{n,m}(1-z^2)},
\]
where $P_{n,m}(\xi)/Q_{n,m}(\xi)$ is the $(n,m)$ Pad\'e approximant to $(1-\xi)^{-1/2}$ at the point $0$, that is
\[
\frac{P_{n,m}(\xi)}{Q_{n,m}(\xi)}-(1-\xi)^{-1/2}=O(\xi^{m+n+1}),	
\]
as $\xi$ tends to $0$ and $P_{n,m}$ and $Q_{n,m}$ are polynomials of degree $n$ and $m$, respectively.

We define the principal reciprocal Pad\'e iteration for $m=n+1$ and $m=n$ as $\wt g_r(z):=\wt g_{m+n+1}(z)= \varphi_{2m,2n+1}(z)$, for which we prove the following composition property.
\begin{lemma}\label{lem:ma}
Let $r,s$ be positive integers. If $r$ is even then $\wt g_{rs}(z)=\wt g_r(\wt g_s(z))$, if $r$ is odd then $\wt g_{rs}(z)=\wt g_{r}\left(\frac 1{\wt g_s(z))}\right)$.
\end{lemma}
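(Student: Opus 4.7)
The plan is to reduce the lemma to two ingredients: a universal composition identity for the reciprocal Padé family, and a parity-dependent symmetry of $\wt g_r$ under $z\mapsto 1/z$.

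First I would set $\phi_r(z):=1/\wt g_r(z)=z\,P_{n,m}(1-z^2)/Q_{n,m}(1-z^2)$, the classical principal Padé iteration function for the sign, of order $r=m+n+1$. The crucial step is the semigroup law $\phi_r\circ\phi_s=\phi_{rs}$. A slick derivation uses the substitution $z=\tanh y$, under which $1-z^2=\mathrm{sech}^2 y$; one verifies (starting from $r=2,3$ and in general via the explicit hypergeometric form of $P_{n,m},Q_{n,m}$) the multi-angle identity
\[
\phi_r(\tanh y)=\tanh(ry).
\]
The obvious semigroup $\tanh(r(sy))=\tanh(rsy)$ then yields $\phi_r\circ\phi_s=\phi_{rs}$, and passing to reciprocals gives the universal composition
\[
\wt g_{rs}(z)=\wt g_r\bigl(1/\wt g_s(z)\bigr),\qquad r,s\ge 1.
\]

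Second I would prove the symmetry
\[
\wt g_r(1/z)=\begin{cases}\wt g_r(z), & r\text{ even},\\ 1/\wt g_r(z), & r\text{ odd}.\end{cases}
\]
Change variables to $w=z^2$ and set $\wt P(w):=P_{n,m}(1-w)$ and $\wt Q(w):=Q_{n,m}(1-w)$, so that $\wt P/\wt Q$ is the $[n/m]$ Padé approximant to $w^{-1/2}$ at $w=1$, normalized by $\wt P(1)=\wt Q(1)=1$. The involution $w\mapsto 1/w$ fixes $w=1$ and sends $w^{-1/2}$ to $w^{1/2}=1/w^{-1/2}$; combining this with the reciprocal-polynomial identity $p(1/w)=w^{-\deg p}\,p^*(w)$ shows that $\wt P^*/\wt Q^*$ is itself a Padé approximant to $w^{\pm 1/2}$ of matching type. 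Padé uniqueness together with the normalization then forces $\wt P^*=\wt P$ and $\wt Q^*=\wt Q$ (palindromic) in the even case $m=n+1$, and $\wt P^*=\wt Q$, $\wt Q^*=\wt P$ in the odd case $m=n$. Substituting these back into $\wt g_r(1/z)=z\,\wt Q(1/z^2)/\wt P(1/z^2)$ and tracking the powers of $z$ produces the two cases of the symmetry.

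Combining the two ingredients closes the proof: for $r$ even the symmetry converts $\wt g_r(1/\wt g_s(z))$ into $\wt g_r(\wt g_s(z))$, and for $r$ odd the composition identity already appears in the required form. I expect the main obstacle to be the universal composition itself; the $\tanh$ derivation is elegant but demands an independent verification that the rational function in $\mathrm{sech}^2 y$ coming from $\tanh(ry)/\tanh(y)$ is precisely the prescribed $[n/m]$ Padé approximant of $(1-\xi)^{-1/2}$ at $\xi=0$. A fallback is a direct Padé-uniqueness argument for the composition: both $\wt g_{rs}(z)$ and $\wt g_r(1/\wt g_s(z))$ are rational functions in $z$ of compatible type that agree with $1$ to the correct contact order at $z=1$, and uniqueness of the relevant Padé-type approximant then forces them to coincide. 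The symmetry step is comparatively routine once this framework is in place.
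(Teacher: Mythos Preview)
Your proposal is correct and structurally identical to the paper's argument: both reduce to (i) the composition law $g_{rs}=g_r\circ g_s$ for $g_r:=1/\wt g_r$, and (ii) the parity symmetry $g_r(1/z)=g_r(z)$ for $r$ even and $g_r(1/z)=1/g_r(z)$ for $r$ odd. The difference lies entirely in how these two facts are obtained.

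The paper takes a much shorter path by invoking the closed form
\[
\wt g_r(z)=\frac{(1+z)^r+(1-z)^r}{(1+z)^r-(1-z)^r},
\]
which it cites from Higham's book. With this in hand, (ii) is read off by substituting $1/z$ and using $(z-1)^r=(\pm 1)^r(1-z)^r$, and (i) follows from the observation that $g_r=\mathcal C\circ(\cdot)^r\circ\mathcal C$ with $\mathcal C(z)=(1-z)/(1+z)$, so composition of $g_r$ with $g_s$ is conjugate to multiplication of exponents. Your $\tanh$ identity $\phi_r(\tanh y)=\tanh(ry)$ is exactly this Cayley conjugation in exponential coordinates (set $e^{-2y}=\mathcal C(z)$), so the ``independent verification'' you worry about is already contained in the closed form; no hypergeometric analysis or Pad\'e-uniqueness fallback is needed. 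Likewise, your palindromic-polynomial argument for the symmetry is valid but considerably heavier than the one-line substitution available once the closed form is on the table. In short: same skeleton, but the paper's execution is shorter because it front-loads the explicit formula.
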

\begin{proof}
The principal reciprocal Pad\'e iterations are the reciprocal of the well-known principal Pad\'e iterations, namely 
\begin{equation}\label{eq:grt}
	\wt g_k(z)=\frac{1}{g_k(z)}=\frac{(1+z)^k+(1-z)^k}{(1+z)^k-(1-z)^k}
\end{equation}
where the latter equality follows from the explicit expression of $g_k(z)$ given in \cite[Thm. 5.9]{highambook}. Notice that if $r$ is even, then $g_r(1/z)=g_r(z)$, moreover, $g_{rs}(z)=g_r(g_s(z))$ (in fact it is easy to see that the principal Pad\'e iterations are conjugated to the powers through the Cayley transform $\mathcal C(z)=(1-z)/(1+z)$, that is $g_r(z)=\mathcal C(\mathcal C(z)^r)$), and thus
\[
	\wt g_{r}(\wt g_{s}(z))=\frac{1}{g_{r}(\frac{1}{g_s(z)})}
	=\frac{1}{g_{r}(g_s(z))}=\frac 1{g_{rs}(z)}=\wt g_{rs}(z),
\]
while if $r$ is odd, then  $g_r(1/z)=1/g_r(z)$ and we get $\wt g_{rs}(z)=\wt g_{r}\left(\frac 1{\wt g_s(z))}\right)$.
\end{proof}

We are ready to state the main result of the section where we use $\wt g_2(z)=\frac{1+z^2}{2z}$.
\begin{theorem}\label{thm:pade}
Let $P_k(z)/Q_k(z)$ be the $[2^{k-1},2^{k-1}-1]$ Pad\'e approximant at $0$ to the function $(1-z)^{-1/2}$, with $k>0$, then $A_k=A Q_k(I-A^{-1}B)P_k(I-A^{-1}B)^{-1}$.
\end{theorem}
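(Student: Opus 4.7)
The plan is to leverage the identification $A_k=(A\#B)Z_k$ from \eqref{eq:averasign}, where $Z_k$ is the Newton--sign iteration $Z_{k+1}=(Z_k+Z_k^{-1})/2$ started at $Z_0=S^{-1/2}$ with $S=A^{-1}B$, and then to rewrite $Z_k$ using the principal reciprocal Pad\'e iteration $\wt g_{2^k}$ introduced immediately before the statement.

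First I would observe that one Newton step is exactly the application of $\wt g_2(z)=(1+z^2)/(2z)$, so $Z_k=(\wt g_2)^{\circ k}(Z_0)$. An induction on $k$ using Lemma~\ref{lem:ma} with $r=2$ (even) and $s=2^{k-1}$ gives $\wt g_2\circ \wt g_{2^{k-1}}=\wt g_{2^k}$, hence $Z_k=\wt g_{2^k}(S^{-1/2})$. Since $2^k$ is even, formula \eqref{eq:grt} shows $\wt g_{2^k}(1/z)=\wt g_{2^k}(z)$, so equivalently $Z_k=\wt g_{2^k}(S^{1/2})$. To get a matrix version of these scalar identities I would invoke the similarity invariance \eqref{eq:siminv} of matrix functions together with the fact that $S$ has real positive spectrum, so $S^{1/2}$ is well defined.

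Next I would substitute the Pad\'e expression $\wt g_{2^k}(z)=\varphi_{2m,2n+1}(z)=Q_{n,m}(1-z^2)/\bigl(z\,P_{n,m}(1-z^2)\bigr)$, with $m=2^{k-1}$ and $n=2^{k-1}-1$, evaluated at $z=S^{1/2}$. Since $1-z^2=I-S$, the substitution gives
\[
Z_k=Q_{n,m}(I-S)\,S^{-1/2}\,P_{n,m}(I-S)^{-1}.
\]
Multiplying on the left by $A\#B=AS^{1/2}$ cancels the factor $S^{\pm 1/2}$ and yields
\[
A_k=A\,Q_{n,m}(I-A^{-1}B)\,P_{n,m}(I-A^{-1}B)^{-1},
\]
which is precisely the claimed formula once $P_k,Q_k$ are identified with $P_{n,m},Q_{n,m}$ at the degree pair matching the $[2^{k-1},2^{k-1}-1]$ index in the statement.

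The main obstacle I anticipate is bookkeeping of conventions and degrees: the Pad\'e polynomials in $\varphi_{2m,2n+1}$ have the ``numerator of $Z_k$'' being $Q_{n,m}$ (of the larger degree $m=2^{k-1}$) rather than $P_{n,m}$, and one must keep straight which of $P_k,Q_k$ in the theorem plays the role of numerator and denominator; a quick sanity check at $k=1,2$ (where the $[1,0]$ and $[2,1]$ Pad\'e approximants are easy to write down and $A_1=(A+B)/2$, $A_2=A(I+6S+S^2)(4(I+S))^{-1}$ follow from the averaging iteration or the three-term recurrence \eqref{eq:3terms}) pins down the correspondence. Apart from this clerical point, the argument is the purely formal manipulation outlined above, made rigorous by the commutativity of all matrices appearing (they are rational functions of the single matrix $S$) and by the similarity invariance of the square root.
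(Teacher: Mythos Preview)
Your proposal is correct and follows essentially the same route as the paper: both use \eqref{eq:averasign} to write $A_k=(A\#B)Z_k$, prove by induction via Lemma~\ref{lem:ma} that $Z_k=\wt g_{2^k}(Z_0)$, invoke the evenness $\wt g_{2^k}(z)=\wt g_{2^k}(1/z)$ from \eqref{eq:grt} to switch to the argument $S^{1/2}=Z_0^{-1}$, and then substitute the Pad\'e form of $\varphi_{2^k,2^k-1}$ so that the factor $S^{\pm 1/2}$ cancels against $A\#B=AS^{1/2}$. Your remarks on commutativity of the matrices involved and the sanity checks at $k=1,2$ are welcome additions but do not change the underlying argument.
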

\begin{proof} 
Let $Z_0=(A^{-1}B)^{-1/2}$.  We prove that $Z_k=\wt g_{2^k}(Z_0)=\varphi_{2^k,2^{k}-1}(Z_0)$, this is true for $k=1$, in fact $Z_1=\wt g_2(Z_0)$, while to prove the inductive step we use Lemma \ref{lem:ma} so that $\wt g_{2^{k+1}}(Z_0)=\wt g_{2}(\wt g_{2^{k}}(Z_0))=\wt g_2(Z_k)=Z_{k+1}$.

Equation \eqref{eq:grt} gives $\wt g_{2^k}(z)=\wt g_{2^k}(1/z)$ and then 
$\wt g_{2^k}(Z_0)=\wt g_{2^k}(Z_0^{-1})=\varphi_{2^{k},2^{k}-1}(Z_0^{-1})$.
Thus, in view of equation \eqref{eq:averasign} and recalling that $A\#B=AZ_0^{-1}$, we have
\begin{multline}\nonumber
	A_k=AZ_0^{-1}Z_k=
\\=AZ_0^{-1}Z_0Q_{2^{k-1},2^{k-1}-1}(I-Z_0^{-2})P_{2^{k-1},2^{k-1}-1}(I-Z_0^{-2})^{-1}
\\=AQ_k(I-A^{-1}B)P_k(I-A^{-1}B)^{-1}.
\end{multline}
\end{proof} 

As a byproduct of the previous analysis we get that the Newton method for the scalar square root is related to the Pad\'e approximation of the square root function.
\begin{Cor}
Let $z\in\C\setminus (-\infty,0]$, and let 
\[
	z_{k+1}=\frac 12(z_
k+zz_k^{-1}),\quad z_0=z,
\]
be the Newton iteration for the square root of $z$, then $z_k=\frac{p(z)}{q(z)}$,
where $p(z)/q(z)$ is the $[2^{k-1},2^{k-1}-1]$ Pad\'e approximant at 1 of the square root function $z^{1/2}$. 
\end{Cor}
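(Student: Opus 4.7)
The plan is to combine the quadratic convergence of Newton's iteration with a degree count, then invoke uniqueness of Pad\'e approximants.

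First, by induction one checks that $z_k = p_k(z)/q_k(z)$ is a rational function with $\deg p_k \le 2^{k-1}$ and $\deg q_k \le 2^{k-1}-1$ for $k \ge 1$. Rewriting the recurrence as $z_{k+1} = (p_k^2 + z q_k^2)/(2 p_k q_k)$ and starting from $z_1 = (1+z)/2$ (so $\deg p_1 = 1$, $\deg q_1 = 0$), the numerator degree at most doubles and the denominator degree grows by at most $\deg p_k$ per step. In parallel, Newton's iteration satisfies the standard quadratic convergence identity
\[
z_{k+1} - \sqrt{z} = \frac{(z_k - \sqrt{z})^2}{2\, z_k},
\]
and combined with $z_0 - \sqrt{z} = z - \sqrt{z} = O(z-1)$ as $z \to 1$ and $z_k(z) \to 1$ as $z \to 1$ (which keeps the denominator bounded away from zero), induction gives $z_k - \sqrt{z} = O((z-1)^{2^k})$ in a neighborhood of $z = 1$.

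Putting the two together, $p_k/q_k$ has numerator degree at most $2^{k-1}$, denominator degree at most $2^{k-1}-1$, and matches $\sqrt{z}$ at $z = 1$ to order $(z-1)^{2^k} = (z-1)^{2^{k-1} + (2^{k-1}-1) + 1}$. Since $\sqrt{z}$ is analytic and nonzero at $z = 1$, the uniqueness of the Pad\'e approximant (formulated with degree \emph{bounds}, to cover the possibility of common factors between $p_k$ and $q_k$ that lower the reduced degrees) identifies $z_k$ as the $[2^{k-1}, 2^{k-1}-1]$ Pad\'e approximant of $\sqrt{z}$ at $z = 1$.

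The same conclusion can alternatively be read off from Theorem~\ref{thm:pade} by the scalar specialization $A = 1$, $B = z$: the Newton map $f(x) = \frac{1}{2}(x + z/x)$ satisfies $f(1) = f(z) = (z+1)/2$, so the corollary's iteration (starting at $z_0 = z$) coincides from index one onwards with the averaging iteration $A_k$ (starting at $A_0 = 1$), and the change of variables $\xi = 1-z$ followed by reciprocation converts the Pad\'e approximant of $(1-\xi)^{-1/2}$ at $\xi = 0$ supplied by the theorem into the Pad\'e approximant of $\sqrt{z}$ at $z = 1$. The main technical point in either route is the careful bookkeeping of numerator versus denominator degrees through the transformations.
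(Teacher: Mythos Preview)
Your proposal is correct. The paper states this corollary without proof, as an immediate byproduct of Theorem~\ref{thm:pade} and the discussion preceding it; your second route (scalar specialization $A=1$, $B=z$, together with the observation that the Newton iteration started at $z_0=z$ and the averaging iteration started at $A_0=1$ agree from index~$1$ onwards) is exactly that intended argument, and your remark about the degree bookkeeping under the substitution $\xi=1-z$ and reciprocation is on point.

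Your first route, by contrast, is a genuinely different and more elementary argument: rather than invoking the connection with the sign iteration, the principal Pad\'e iteration functions $\wt g_r$, and Lemma~\ref{lem:ma}, you verify directly that (i) the Newton iterates are rational of type $[2^{k-1},2^{k-1}-1]$ by the recursion $z_{k+1}=(p_k^2+zq_k^2)/(2p_kq_k)$, and (ii) they match $\sqrt z$ at $z=1$ to order $(z-1)^{2^k}$ via the quadratic error identity $z_{k+1}-\sqrt z=(z_k-\sqrt z)^2/(2z_k)$, and then you conclude by uniqueness of the Pad\'e approximant. This is self-contained and avoids the entire apparatus of Section~\ref{sec:pade}; the paper's route, on the other hand, situates the corollary within a unified framework that simultaneously explains the averaging iteration, the sign iteration, and the polar decomposition algorithm. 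Both are sound; yours is shorter if one wants only this scalar fact, while the paper's buys a structural explanation of why several matrix algorithms coincide.
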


\begin{Rem}\rm
Ra\"issouli and Leazizi propose in \cite{rl} an algorithm for the matrix geometric mean which is based on a matrix version of the continuous fraction expansion for scalars $a,b>0$,
\[
\sqrt{ab}=\left[\frac{a+b}2;\frac{-(\frac{a-b}2)^2}{a+b}\right]_{k=1}^{\infty}.
\]
The  partial convergent $t_N=\left[\frac{a+b}2;\frac{-(\frac{a-b}2)^2}{a+b}\right]_{k=1}^{N}$ is proved to be
\[
	t_N=\sqrt{ab}\frac
	{(1+\sqrt{ab})^{2N+2}+(1-\sqrt{ab})^{2N+2}}
	{(1+\sqrt{ab})^{2N+2}-(1-\sqrt{ab})^{2N+2}},
\]
thus from the expression for the Pad\'e approximation in \eqref{eq:grt}, and the characterization of the averaging iteration in terms of the Pad\'e approximation we get that $A_k=t_{2^{k-2}-1}$, for $k\ge 2$, where $A_k$ is one of the sequences obtained by the averaging iteration with $A_0=a$ and $B_0=b$.

The same equivalence holds in the matrix case, so we get that the sequence $t_N$ converges linearly to the matrix geometric mean with a cost similar to the averaging iteration which indeed converges quadratically and moreover can be scaled. Thus, the sequence $t_N$ is of little computational interest.
\end{Rem}

\subsection{Algorithms based on the polar decomposition}

Let $A=R_A^* R_A$ and $B=R_B^*R_B$ be the Cholesky factorizations of $A$ and $B$, respectively.  Using these factorization in formula \eqref{eq:fact} we obtain the following representations for the matrix geometric mean
\begin{equation}\label{eq:pdformula}
A\#B=R_A^*\polar(R_AR_B^{-1})R_B,
=R_B^*\polar(R_BR_A^{-1})R_A
=R_A^*\polar(R_BR_A^{-1})^*R_B,
\end{equation}
where we have used the symmetry of the matrix $A\#B$ and the commutativity of the matrix geometric mean function.

We derive from \eqref{eq:pdformula} an algorithm for computing the
matrix geometric mean.

\bigskip

{\noindent \bf Algorithm 5.2 (Polar decomposition)} Given $A$ and
$B$ positive definite matrices with $\mu(A)\le \mu(B)$.
\begin{enumerate}
  \item Compute the Cholesky factorizations
  $A=R_A^*R_A$ and $B=R_B^*R_B$;
  \item Compute the unitary polar factor $U$ of $R_BR_A^{-1}$;
  \item Compute $A\#B=R_B^*UR_A$.
\end{enumerate}
\bigskip

The polar factor of a matrix $M$ can be computed forming its
singular value decomposition, say $M=Q_1^*\Sigma Q_2$; from which we get the polar
factor of $M$ as $Q_1^*Q_2$. This procedure is
suitable for an accurate computation due to the good numerical
property of the SVD algorithm, but it is expensive with respect to 
a method based on matrix iterations.

A more viable way to compute the unitary polar factor of $M$ is to use the
scaled Newton method
\begin{equation}\label{eq:Npoldec}
    Z_{k+1}=\frac{\gamma_k Z_k+(\gamma_k Z_k)^{-*}}2,\quad Z_0=M,
\end{equation}
where $\gamma_k>0$ can be chosen in order to reduce the number of
steps needed for convergence.

A nice property of the scaled Newton method for the unitary polar factor
of a matrix is that the number of steps can be predicted in
advance for a certain machine precision and the algorithm is backward
stable if the inversion is performed in a mixed backward/forward way (see \cite{kz,hn}).
An alternative is to compute the polar decomposition using a scaled Halley iteration as in \cite{hn}.

The better choice for the scaling factor in the Newton's iteration is the optimal scaling $\gamma_k=(\sigma_1(X_k)\sigma_n(X_k))^{1/2}$, where $\sigma_1(X_k)$ and $\sigma_n(X_k)$ are the extreme singular values of $X_k$. In practice cheaper approximations of the optimal scaling are available \cite[Sec. 8.6]{highambook}.

If $\gamma_k=1$ for each $k$, then the sequence $Z_k$ obtained by
iteration \eqref{eq:Npoldec} with $Z_0=R_BR_A^{-1}$ is strictly
related to the sequence obtained by the averaging
technique, in fact $Z_k=R_B^{-*}A_kR_A^{-1}$, where $A_k$ is
defined in \eqref{eq:2nd1b1} with $A_0=B$.
This
equality can be proved by an induction argument in fact
$R_B^{-*}A_0R_A^{-1}=
R_B^{-*}R_B^*R_BR_A^{-1}=Z_0$
and if the equality is true for $k$, then 
\begin{multline}\nonumber
    Z_{k+1}=\frac 12(Z_k+Z_k^{-*})=R_B^{-*}\left(\frac {A_k}2
    +R_B^*\frac {R_BA_k^{-1}R_A^*}2 R_A
    \right)R_A^{-1}\\
    =R_B^{-*}\left(
    \frac{A_k+BA_k^{-1}A}2
    \right)R_A^{-1}=R_B^{-*}\left(
    \frac{A_k+AA_k^{-1}B}2
    \right)R_A^{-1}=R_B^{-*}A_{k+1}R_A^{-1}.
\end{multline}
The equality $R_B^*Z_kR_A=A_k$, and the monotonicity
of $A_k$ proves that the approximated value of $A\# B$ is greater
than or equal to $A\# B$ in the order of $\mathcal P_n$.

\begin{Rem}\label{rem:higham}\rm
Notice that for $A=I$, Algorithm 5.2 reduces to the algorithm 
of Higham \cite[Alg.
6.21]{highambook} for the square root of a positive matrix $B$.
A side-result of the previous discussion is that Higham's algorithm can be 
seen as yet another variant of the Newton method for the matrix 
square root.
\end{Rem}

\subsection{Gaussian quadrature}

A third algorithm is obtained using the integral representation
\eqref{eq:gm-int} obtained by Ando, Li and Mathias \cite{alm} using an Euler integral,
the same representation is obtained by Iannazzo and Meini \cite{im:palin} from the Cauchy
integral formula for the function $z^{-1/2}$.

The change of variable $z=\frac{t+1}2$ yields
\begin{equation}\label{eq:gm-int2}
  A\#B=\frac 2{\pi}\int_{-1}^1
  \frac{((1+z)B^{-1}+(1-z)A^{-1})^{-1}}{\sqrt{1-z^2}}\,dz,
\end{equation}
which is well suited for Gaussian quadrature with respect to the
weight function $\omega(z)=(1-z^2)^{-1/2}$, referred as
Gauss-Chebyshev quadrature since the orthogonal polynomials with
respect to the weight $\omega(z)$ are the Chebyshev polynomials (see
\cite{gcq} for more details). For an integral of the
form
\[
    \int_{-1}^1 \frac{f(z)}{\sqrt{1-z^2}}\,dz,
\]
where $f$ is a suitable function, the formula is
\[
    \Sigma_{n+1}=\frac {\pi}{n+1}\sum_{k=0}^n f(x_k),\quad
x_k=\cos\left(\displaystyle\frac{(2k+1)\pi}{2(n+1)}\right),\quad
k=0,\ldots,n.
\]

Applying the Gauss-Chebyshev quadrature formula to \eqref{eq:gm-int2} we obtain the following approximation of $A\#B$
\begin{equation}\label{eq:gc}
\begin{split}
    T_{N+1}(A,B) & =\frac {2}{N+1}\sum_{k=0}^N
    ((1+x_k)B^{-1}+(1-x_k)A^{-1})^{-1}\\
    & =B\left(\frac {2}{N+1}\sum_{k=0}^N
    ((1+x_k)A+(1-x_k)B)^{-1}
	\right)A.
\end{split}
\end{equation}

\bigskip

{\noindent \bf Algorithm 5.3 (Gauss-Chebyshev quadrature)} Given $A$ and
$B$ positive definite matrices. Choose $N$ and set
\[
	A\#B \approx T_N(A,B).
\]
where $T_N(A,B)$ is defined in \eqref{eq:gc}
\bigskip

The computation cost is the inversion of a positive matrix, that is $n^3$ ops, for each node of the quadrature and two matrix multiplication at the end. The number of nodes required to get a fixed accuracy depends on the regularity of the function $\psi(z)=((1+z)A+(1-z)B)^{-1}$. The function $\psi(z)$ is rational and thus analytic in the complex plane except the values of $z$ such that  $\psi(z)$ is singular, which are the reciprocal of the nonzero eigenvalues of the matrix $(B-A)(B+A)^{-1}= (A^{-1}B-I)(A^{-1}B+I)^{-1}$. 

We claim that all the poles are real and lie outside the interval $[-1,1]$, which is equivalent to require that the eigenvalues of $(A^{-1}B-I)(A^{-1}B+I)^{-1}$ lie in the interval $(-1,1)$. Define $\mathcal C(z)=(z-1)/(z+1)$, then the image under $\mathcal C(z)$ of the positive real numbers is the interval $(-1,1)$, then the eigenvalues of $\mathcal C(A^{-1}B)$ lie in the interval $(-1,1)$ since $A^{-1}B$ has positive eigenvalues $\la_1,\ldots,\la_n$ and the eigenvalues of $\mathcal C(A^{-1}B)$ are $\mathcal C(\la_1),\ldots,\mathcal C(\la_n)$ (compare \cite[Thm. 1.13]{highambook}).

Standard results on the convergence of the Gauss-Chebyshev quadrature (see \cite[Thm. 3]{gcq}) imply 
that the sequence $T_N(A,B)$ converges to $A\#B$ linearly, in particular for each $ \rho^2<\xi<1$, it holds that $\|T_N(A,B)-A\#B\|=O(\xi^{N})$, where $1/\rho$ is the sum of the semiaxes of an ellipse with foci in $1$ and $-1$ and whose internal part is fully contained in the region of analiticity of $\psi(z)$.

Since the poles of $\psi(z)$ are real and lie outside the interval $(-1,1)$, then the largest ellipse is obtained for $\rho=1/(\frac 1{\sigma}+\sqrt{\frac{1}{\sigma^2}-1})=\sigma/(1+\sqrt{1-\sigma^2})$, where $\sigma=\max\{\mathcal |C(\la_i)|\}$ (notice that $1/\sigma$ is the pole of $\psi(z)$ nearest to $[-1,1]$).

If $m$ and $M$ are the smallest and largest, respectively, eigenvalues of $A^{-1}B$, then the convergence of $T_N(A,B)$ is slow if $m$ is small or $M$ is large. 
By a suitable scaling of $A$, it is possible to have $mM=1$, which gives a faster convergence, however, when $M/m$ tends to infinity the parameter of linear convergence tends to 1, in this case a simple analysis shows that $\rho=1+O\left(\sqrt{\frac Mm}\right)$ and thus the parameter of linear convergence of $T_N(A,B)$ depends linearly on $M/m$. In Section \ref{sec:minimax} we give another quadrature formula whose dependence on $M/m$ is just logarithmic.

A comparison of the parameters of linear convergence for the Gauss-Chebyshev formula $T_N(A,B)$ and the parameters of quadratic convergence for the averaging iteration in Theorem \ref{thm:avconv} reveals that they are essentially the same. This is not a mere
coincidence, in view of the following result.

\begin{thm}\label{thm:GU}
Let $T_k$ be the quadrature formula of \eqref{eq:gc} and $B_k$ be
the sequence obtained by the averaging technique \eqref{eq:2nd1}
then $B_k=T_{2^{k-1}}$, for $k=1,2,\ldots$
\end{thm}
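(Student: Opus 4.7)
The plan is to identify $B_k$ and $T_{2^{k-1}}$ as the same scalar rational function of $S:=A^{-1}B$ premultiplied by $A$. Using the factorization $(1+x)B^{-1}+(1-x)A^{-1}=((1+x)I+(1-x)S)B^{-1}$ together with $B=AS$, I would rewrite
\[
T_N = A\cdot\frac{2}{N}\sum_{j=0}^{N-1} S\bigl((1+x_j)I+(1-x_j)S\bigr)^{-1}.
\]
In parallel, the averaging--sign connection \eqref{eq:averasign} gives $A_k=(A\# B)\,\tilde g_{2^k}(S^{-1/2})$, and the parity $\tilde g_{2^k}(1/z)=\tilde g_{2^k}(z)$ (used in the proof of Lemma~\ref{lem:ma}) allows one to write $A_k = A\,p_k(S)$ with $p_k(s):=\sqrt{s}\,\tilde g_{2^k}(\sqrt{s})$, a genuine rational function of $s$ by \eqref{eq:grt}. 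From $B_k=AA_k^{-1}B$ it then follows that $B_k = A\cdot S\,p_k(S)^{-1}$. All factors on both sides are rational functions of $S$ and hence commute, so the claimed equality reduces to the scalar identity
\[
\frac{1}{p_k(s)} = \frac{2}{N}\sum_{j=0}^{N-1}\frac{1}{(1+x_j)+(1-x_j)s},\qquad N=2^{k-1},
\]
for $s>0$; this lifts to matrices by the spectral theorem since $S$ has positive real spectrum.

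To verify the scalar identity I would substitute the Cayley variable $t=(\sqrt{s}-1)/(\sqrt{s}+1)\in(-1,1)$, so that $\sqrt{s}=(1+t)/(1-t)$, $1+\sqrt{s}=2/(1-t)$, and $1-\sqrt{s}=-2t/(1-t)$. A direct binomial computation in \eqref{eq:grt} collapses the left-hand side to $(1-t)(1-t^{2N})/((1+t)(1+t^{2N}))$. On the quadrature side one gets $(1+x)+(1-x)s=2(1+t^2-2tx)/(1-t)^2$, so the sum becomes $\frac{(1-t)^2}{N}\sum_{j=0}^{N-1}(1+t^2-2tx_j)^{-1}$. Matching the two sides is therefore equivalent to the Gauss--Chebyshev sum identity
\[
\sum_{j=0}^{N-1}\frac{1}{1+t^2-2tx_j}=\frac{N(1-t^{2N})}{(1-t^2)(1+t^{2N})}.
\]

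The main obstacle is this last identity. My plan is to derive it from the Chebyshev factorization $\prod_{j=0}^{N-1}(1+t^2-2tx_j)=1+t^{2N}$: since the $x_j$ are the roots of $T_N$, one writes $1+t^2-2tx_j=-2t(x_j-y)$ with $y=(1+t^2)/(2t)$ and uses the standard evaluation $T_N((t+t^{-1})/2)=(t^N+t^{-N})/2$. The sum is then obtained from the partial-fraction expansion $\sum_j 1/(y-x_j)=T_N'(y)/T_N(y)$, combined with $T_N'=N\,U_{N-1}$ and $U_{N-1}((t+t^{-1})/2)=(t^N-t^{-N})/(t-t^{-1})$. Assembling these ingredients yields the scalar identity and hence $B_k=T_{2^{k-1}}$.
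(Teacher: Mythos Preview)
Your argument is correct, and it follows a genuinely different route from the paper's proof. The paper expresses $B^{-1}T_N$ through the substitution $\mathcal K(S)=(I+S)(I-S)^{-1}$ and the logarithmic derivative $\mathcal T_N'/\mathcal T_N$, then proves \emph{inductively} that $T_{2^k}$ satisfies the same harmonic--mean recursion as $B_k$; this requires the Chebyshev doubling identities $\mathcal T_{2N}=2\mathcal T_N^2-1$ and $\mathcal T_{2N}'=4\mathcal T_N'\mathcal T_N$, and a separate limiting argument when $S-I$ is singular (since $\mathcal K$ is undefined there). You instead import the closed form $A_k=(A\#B)\,\wt g_{2^k}(S^{-1/2})$ already established in the Pad\'e section, turn both $B_k$ and $T_{2^{k-1}}$ into $A$ times a rational function of $S$, and equate the scalar functions directly via the Cayley variable $t=(\sqrt s-1)/(\sqrt s+1)$; the Chebyshev input you need is the single evaluation of $T_N'/T_N$ at $y=(t+t^{-1})/2$ using $T_N(y)=(t^N+t^{-N})/2$ and $U_{N-1}(y)=(t^N-t^{-N})/(t-t^{-1})$. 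Your path avoids both the doubling identities and the singular case (no $(I-S)^{-1}$ ever appears), at the price of relying on Theorem~\ref{thm:pade}, whereas the paper's proof is self-contained within the quadrature subsection. The two arguments are secretly close---your $y$ equals $-\mathcal K(s)$---but yours verifies the identity in one shot rather than through the recursion.
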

\begin{proof}
Let $S=A^{-1}B$, assume that $S-I$ is invertible, then
\[ \begin{split} B^{-1}T_N&
=\frac
2N\sum_{k=0}^{N-1}((1+x_k)I+(1-x_k)S)^{-1}\\
&=\frac 2N \sum_{k=0}^{N-1}(I+S+x_k(I-S))^{-1},\\
&=\frac 2N(I-S)^{-1} \sum_{k=0}^{N-1}(\mathcal K(S)+x_kI)^{-1},
\end{split}
\]
where $\mathcal K(z)=(1+z)/(1-z)$. Let $\mathcal T_N(x)$ be the
$N$th Chebyshev polynomial, then $\sum_{k=0}^{N-1}
(x_k+t)^{-1}=\mathcal T'_N(t)/\mathcal T_N(t)$, thus
\[ \begin{split} B^{-1}T_N &=\frac 2N(I-S)^{-1}
\mathcal T'_N(\mathcal K(S)) \mathcal T_N(\mathcal K(S))^{-1}.
\end{split}
\]
To conclude the proof, since $T_1=B_1$ by direct inspection, it is enough to prove by induction that
$T_{2^k}=2(T_{2^{k-1}}^{-1}+A^{-1}T_{2^{k-1}}B^{-1})^{-1}$ which
is equivalent to prove that $2(B^{-1}T_{2^k})^{-1}=
(B^{-1}T_{2^{k-1}})^{-1} +SB^{-1}T_{2^{k-1}}$ ($T_{2^{k}}$ is invertible since the zeros of $\mathcal T'_{2^k}(t)$ lie in $(-1,1)$). Observe that
$\mathcal T'_{2^k}(t)=4\mathcal T'_{2^{k-1}}\mathcal T_{2^{k-1}}$,
and thus 
\[
2(B^{-1}T_{2^k})^{-1}=2^{k-2}(I-S)\mathcal {T'}_{2^{k-1}}(\mathcal K(S))^{-1}\mathcal T_{2^{k-1}}(\mathcal K(S))^{-1}\mathcal T_{2^k}(\mathcal K(S)).
\]
On the other hand
\begin{multline}\nonumber
(B^{-1}T_{2^{k-1}})^{-1}+ SB^{-1}T_{2^{k-1}}\\ =  
2^{k-2}(I-S)\mathcal {T'}_{2^{k-1}}(\mathcal K(S))^{-1}
\mathcal T_{2^{k-1}}(\mathcal K(S))
+ 
2^{2-k}S(I-S)^{-1}
\mathcal {T'}_{2^{k-1}}(\mathcal K(S))
\mathcal T_{2^{k-1}}(\mathcal K(S))^{-1}.
\end{multline}
After some manipulations, to conclude it is enough to prove that
\begin{equation}\label{eq:ChPf0}
  \mathcal T_{2^k}(\mathcal K(S))=
    \mathcal T_{2^{k-1}}(\mathcal K(S))^2+2^{4-2k}S(I-S)^{-2}
    \mathcal T'_{2^{k-1}}(\mathcal K(S))^2 ,
\end{equation}
this property is a special case of a more general identity
involving Chebyshev polynomials, in fact for each $k$, it holds
that
\begin{equation}\label{eq:ChPf1}\mathcal T_k(\mathcal
K(z))^2=\ds \frac {4z}{k^2(z-1)^2}{{\mathcal T}'_k(\mathcal
K(z))}^2+1\end{equation} for a complex variable $z$. 
By the change of variable $x=\mathcal K(z)$ formula
\eqref{eq:ChPf1} is equivalent to $\mathcal T_k(x)^2=\ds \frac
{x^2-1}{k^2}{{\mathcal T}'_k(x)}^2+1$ which can be proved directly
using $\mathcal T_k(x)=\cos(k\arccos x)$. The equation
\eqref{eq:ChPf1} implies \eqref{eq:ChPf0} in fact $\mathcal
T_{2^k}=2\mathcal T_{2^{k-1}}^2-1$.

If $S-I$ is singular, then $\beta A^{-1}B-I$ is invertible in a neighborhood of $\beta=1$ except $\beta=1$, thus $T_{2^{k-1}}(A,\beta B)=B_k(A,\beta B)$, which gives the desired equality as $\beta$ tends to 1.
\end{proof}

%%%%%%%%%%%%%%%%%%%%%%%%%%%%%%%%%%%%%%

\section{Algorithms based on the rational minimax
approximation of $z^{-1/2}$}\label{sec:minimax}

In Section \ref{sec:pade} we have found that many algorithms for computing the matrix geometric mean are variations of the one obtained by using certain Pad\'e approximations of $z^{-1/2}$ in the formula $A\#B=A(B^{-1}A)^{-1/2}$. To get something really different, one should change the rational approximation. The natural direction is towards the (relative) rational minimax approximation.

Let $R_{k-1,k}$ be the set of rational functions whose numerator and denominator have degree $k-1$ and $k$, respectively. The function $\wt r_{k,\gamma}(z)$ is said to be the rational relative minimax approximation to $z^{-1/2}$ in the interval $[1,\gamma]$ if it minimizes over $R_{k-1,k}$
the quantity
\[
	\max_{z\in [1,\gamma]}\left|\frac{r(z)-z^{-1/2}}{z^{-1/2}}\right|.
\]
An explicit expression for $\wt r_{k,\gamma}(z)$, in terms of elliptic function is known since the work of Zolotarev in 1877 (see \cite{qcd}).

The same approximation is obtained by Hale, Higham and Trefethen \cite{triplenick} by a trapezoidal quadrature following a clever sequence of substitutions applied to the Cauchy integral formula for $A^{-1/2}$, namely, 
\[
	A^{-1/2}=\frac 1{2\pi \iunit}\oint_{\Gamma} z^{-1/2}(zI-A)^{-1}dz.
\]
Since $A\# B=A(A^{-1}B)^{1/2}=B(A^{-1}B)^{-1/2}$, using the results of \cite{triplenick}, we get the following approximation (obtained by a quadrature formula on $N$ nodes on a suitable integral representation of $A\# B$)
\begin{equation}\label{eq:tref}
	S_N(A,B)=  B\left(\frac {-2K'\sqrt{m}}{\pi N}\sum_{j=1}^N (\omega(t_j)^2A-B)^{-1}\mbox{cn}(t_j)\mbox{dn}(t_j)\right)A
\end{equation}
which is proved to coincide with $A\,r_{N,\gamma}({B^{-1}A})$ for $\gamma=M/m$, where $M$ and $m$ are the largest and the smallest eigenvalues of $A^{-1}B$, respectively.

The notation of \eqref{eq:tref} has the following meaning:
\[
	t_j=\bigl(j-1/2\bigr)\frac {K'}N \iunit,\quad 1\le j\le N,
\]
$w(t_j)=\sqrt{m}\,$sn$(t_j|\gamma)$, where sn$(t_j|\gamma)$, cn$(t_j|\gamma)$ and dn$(t_j|\gamma)$ are the Jacobi elliptic functions, while $K'$ is the complete elliptic integral of the second kind associated with $\sqrt{\gamma}$ (see \cite{as} for an introduction to elliptic functions and integrals).

The convergence of $S_N(A,B)$ to $A\#B$ can be deduced from Theorem 4.1 of \cite{triplenick}. In particular,
\[
	\|A\# B- S_N(A,B)\|=O(e^{-2\pi^2N/(\log(M/m)+3)}).
\]
Thus, the convergence of the sequence $S_N(A,B)$ to $A\#B$ is dominated by a sequence whose convergence is linear with a rate which tends to $1$ as $M/m$ tends to $\infty$, but whose dependence on $M/m$ is just logarithmic. On the contrary, the rate of linear convergence of the Gauss-Chebyshev sequence $T_N(A,B)$ of \eqref{eq:gc} depends linearly on $M/m$, and thus we expect that the formula $S_N(A,B)$ requires less nodes than $T_N(A,B)$ to get the same accuracy on the approximation of $A\#B$ at least for large values $M/m$. In practice, the approximation obtained from $T_N(A,B)$ is always better than $S_N(A,B)$ as suggested by our numerical tests of Section \ref{sec:test}.

We describe the synthetic algorithm.

\bigskip

{\noindent \bf Algorithm 6.1 (Rational minimax)} Given $A$ and
$B$ positive definite matrices. Choose $N$ and set
\[
	A\# B \approx S_N,
\]
where $S_N$ is defined in \eqref{eq:tref}.

%%%%%%%%%%%%%%%%%%%%%%%%%%%%%%%%%%%%%%

\section{Applications}\label{sec:appl}

We review some of the applications in which the geometric mean
of two matrices is required, they range from electrical network analysis \cite{amt} to medical imaging \cite{afpa}, from norm on fractional Sobolev spaces \cite{al} to image deblurring \cite{dbe}, to the computation of the geometric mean of several matrices \cite{alm,bmp}, with indirect applications to radar \cite{barbaresco} and 
elasticity \cite{moa2}.

\subsection{Electrical networks}

Fundamental elements of a circuit are the resistances which can be modeled by positive real numbers. It is a customary high school argument that two consecutive resistances $r_1$ and $r_2$ in the same line can be modeled by a unique joint resistance whose value is the sum $r_1+r_2$, while if the two resistances lie in two parallel lines their joint resistance is the ``parallel sum'' $(r_1^{-1}+r_2^{-1})^{-1}$.

More sophisticated devices based on resistances are $n$-port networks, which are ``objects'' with $2n$ ports at which current and voltage can be measured, without knowing what happens inside. The usual way to model $n$-port networks is through positive definite matrices. In this way two consecutive $n$-ports $A$ and $B$ can be modeled as the joint $n$-port $A+B$, while two parallel $n$-ports give the joint $n$-port $(A^{-1}+B^{-1})^{-1}$. % (for an overview of these arguments see ). 

Complicated circuits, made of several $n$-ports can be reduced to a joint $n$-port using these sums and parallel sums. Consider the circuit in Figure \ref{fig:1}: it is an infinite network (which models a large finite network). 

\begin{figure}
\begin{center}
\includegraphics[width=6cm]{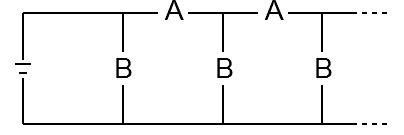}
\end{center}
\caption{An electric circuit whose joint resistance is related to the matrix geometric mean}\label{fig:1}
\end{figure}

Let $Z_k$ be the joint resistance of the subcircuit obtained selecting the first $k$ loops, then it can be shown that $Z_1=B$ and
\[
	Z_{k+1}=(B^{-1}+(A+Z_k)^{-1})^{-1},
\]
and the sequence has limit $\lim_{k\to \infty}Z_k=\frac 12(-A+(A\#(A+4B)))$. This limit is the joint resistance of the infinite circuit. For further details see \cite{amt82}, from which the example is taken.

It is worth pointing out that the definition of geometric mean of two matrices first appeared in connection with these kind of applications \cite{pw}.

\subsection{Diffusion tensor imaging}

The technique of Nuclear Magnetic Resonance (NMR) in medicine produces
images of some internal parts of the body which are used by medics
to give a diagnose of important pathologies or to decide how to perform
a surgery.

One of the quantities measured by the NMR is the diffusion tensor
which is a $3\times 3$ positive matrix describing the diffusion of the
water in tissues like the white matter of the brain or the
prostate. The technique is called Diffusion Tensor Imaging (DTI).

The diffusion tensor is measured for any of the points of an ideal
grid into the tissue, thus one has a certain number of positive matrices
indexed by their positions. 

A problem in DTI is the ``interpolation'' of tensors, that is, given
two tensors, find one or more tensors in the line joining them, the
more adherent to the real data as possible. This is useful for instance
to increase the resolution of an image or to reconstruct some
corrupted parts.

Many models have been given for the interpolation of tensors in DTI,
the most obvious of which is the linear interpolation, where $k$
points between $A$ and $B$ are $P_j=\frac j{k+1}A+\frac{k+1-j}{k+1}B$,
for $j=1,\ldots,k$. The linear interpolation finds point equally
spaced on the line joining $A$ and $B$ in the space of the matrices,
that is, uses the Euclidean geometry of $\C^{n\times n}$. 

Some more adequate models use Riemannian geometries. Using the
geometry given in Section \ref{sec:intro} we get the interpolation points
\[
   P_k^{a}=A\#_{j/(k+1)}B=A(A^{-1}B)^{j/(k+1)},\quad j=1,\ldots k.
\]
Using the {\em log Euclidean} geometry defined in \cite{afpa}, we get the interpolation points
\[
   P_k^{b}=\exp\left(\frac j{k+1}\log(A)+\frac{k+1-j}{k+1}\log(B)\right),\quad
   j=1,\ldots k.
\]
The log Euclidean geometry has been introduced as an approximation to the Riemannian geometry where quantities are easier to be computed. However, in the interpolation problem described here, using the Cholesky-Schur algorithm of Section \ref{sec:schur} to compute $P_k^a$ (reusing the Schur factorization of $R_A^{-*}BR_A^{-1}$ for each $j$) is much less expensive than the computation of $P_k^b$ using the customary algorithms for the logarithm and the exponential of a matrix.

\subsection{Computing means of more than two matrices}

The generalization of the geometric mean to more than two positive matrices is usually identified with their Karcher mean in the geometry given in Section \ref{sec:intro} (see \cite{bi} for a precise definition). 

The Karcher mean of $A_1,\ldots,A_m$ can been obtained as the limit of the sequence $S_k=S_{k-1}\#_{1/k} A_{(k\mathrm{\ mod\ }m)+1}$, with $S_1=A_1$ as proved by Holbrook \cite{holbrook}. The resulting sequence is very slow and cannot be used to design an efficient algorithm for the computation of the Karcher mean, however it may be useful to construct an initial value for some other iterative methods like the Richardson-like iteration of Bini and Iannazzo \cite{bi}.

Other geometric-like means of more than two matrices are based on recursive definitions like the mean proposed by Ando, Li and Mathias \cite{alm}, which for three matrices $A_0,B_0$ and $C_0$ is defined as the common limit of the sequences 
\[
	A_{k+1}=B_k\# C_k,\quad B_{k+1}=C_k\# A_k,\quad C_{k+1}=A_k\# B_k,\quad k=0,1,2,\ldots
\]
These sequences converge linearly to their limit. Another similar definition which gives cubic convergence (to a different limit) has been proposed in \cite{bmp,nakamura}., who propose the iteration
\[
	A_{k+1}=A_k\#_{2/3}(B_k\# C_k),\quad
	B_{k+1}=B_k\#_{2/3}(C_k\# A_k),\quad
	C_{k+1}=C_k\#_{2/3}(A_k\# B_k).
\]
As one can see, the efficient computation of $A\#_t B$ is a basic step to implement these kind of iterations.

\subsection{Image deblurring}

A classical problem in image processing is the image deblurring which consists in finding a clear image from a blurred one. In the classical models, the true and the blurred images are vectors and the blurring operator $A$ is linear, thus the problem is reduced to the linear system $Af=g$ which in practice is very large and ill-conditioned. A computationally easy case is the one in which $A$ is a band Toeplitz matrix, which corresponds to the so-called shift-invariant blurred operators.

Even if $A$ is not shift-invariant, it can be possible, in certain cases, that a change of coordinate $M$ makes it shift-invariant, i.e. $M^TAM$ is band Toeplitz. If such a $M$ exists and is known, then the linear system $Af=g$ has the same nice computational properties as a band Toeplitz system.

When $A$ and $T$ are positive definite, the matrix $M=A^{-1}\#T$ is an explicit change of coordinates. For further details see \cite{dbe}.

\subsection{Discrete interpolation norm}

The material of this section is taken from \cite{al} to which we address the reader for a full detailed description.

Let $\Omega\subseteq\R^n$ be open, bounded and with smooth boundary, and let $H_0^1(\Omega)$ be the Sobolev space of differentiable functions on $L^2(\Omega)$ with zero trace, while $H_0^0(\Omega)$ be the set of functions on $L^2(\Omega)$ with zero trace.
 
Let $\{\varphi_1,\ldots,\varphi_n\}$ be a set of linearly independent piecewise linear polynomials on a suitable subdivision of $\Omega$ (arising, for instance, from a finite elements method), then the span in $H_0^1$ (resp. $H_0^0$) of $\{\varphi_i\}_{i=1,\ldots,n}$, is an Hilbert subspace $X_h$ (resp. $Y_h$).

Define the matrices $L_0$ and $L_1$ such that
\[
	(L_0)_{ij}=\langle\varphi_i,\varphi_j \rangle_{L^2(\Omega)},\qquad
	(L_1)_{ij}=\langle \nabla \varphi_i,\nabla \varphi_j \rangle_{L^2(\Omega)}.
\]
The matrices $L_0$ and $L_1$ are positive definite since they are Grammians with respect to a scalar product, in particular $L_0$ is a discrete identity and $L_1$ is a discrete Dirichlet Laplacian. A norm for the interpolation space $[X_h,Y_h]_\theta$ is given by the energy norm of the matrix
\[
	L_0(L_0^{-1}L_0)^{1-\theta}=L_0\#_{1-\theta} L_1.
\]
The most interesting case is $\theta=1/2$, where the norm is given by the geometric mean of $L_0$ and $L_1$.

A similar construction can be used to generate norm of interpolation spaces between finite dimensional subspaces of generic Sobolev spaces with applications to preconditioners of the Stenkov--Poincar\'e operator or boundary preconditioners for the biharmonic operator.

%%%%%%%%%%%%%%%%%%%%%%%%%%%%%%%%%%%%%%%%%%

\section{Numerical Experiments}\label{sec:test} 

We present some numerical tests to illustrate the behavior in finite precision arithmetic of the algorithms presented in the paper. The tests have been performed using GNU Octave 3.2.3 on a 2008 Laptop. The scripts of the tests are available at the author's personal web page, so that any numerical experiment can be easily replicated by the reader. The implementations are not efficient, but they are made just to test the behavior of the algorithms. Regarding Algorithm 6.1, based on the rational minimax approximation, we have used the code of \cite{triplenick}.
The implementation of the best algorithms for the matrix geometric mean can be found  also in the Matrix Means Toolbox \cite{mmtoolbox}.

As a measure of accuracy we consider  the relative error $\|\wt G-G\|/\|G\|$, where $\wt G$ is the computed value of the geometric mean, while $G$ is the {\em exact} (up to the roundoff) solution obtained by a direct formula.

\begin{Test}\label{test:1}\rm
We want to compare the behavior of the algorithms showing how the convergence of the iterative algorithms and quadrature formulae depends on the quotient $M/m$, where $M$ and $m$ are the largest and the smallest eigenvalues of $A^{-1}B$. 

We consider, for $x>1/2$, the matrices
\[
	A=\twotwo 2112,\quad B=\twotwo x112,
\]
whose corresponding geometric mean and $A^{-1}B$ are
\[
	A\#B=\twotwo{\frac 12(1+\sqrt{6x-3})}112,\quad
	A^{-1}B=\twotwo{\frac 13(2x-1)}0{\frac 13(2-x)}1.
\]
For $x\ge 2$, we have $M/m=\frac 13(2x-1)$.

For $x=10$ and $x=1000$, we compute an approximation of $A\# B$  in double precision using the different algorithms and monitor the relative error at each step for the iterations and for an increasing number of nodes for quadrature rules. The results are drawn in Figure \ref{fig:test1}, where the algorithm considered are: the Averaging algorithm (AV), namely iteration \eqref{eq:2nd1}; the Averaging iteration with spectral scaling (AVs), namely Algorithm 5.1a; the polar decomposition algorithm (PD), namely Algorithm 5.2, where the polar factor is computed by Newton's method with spectral scaling; the rational minimax approximation algorithm (MM), namely Algorithm 6.1; and the Gauss-Chebyshev quadrature (GC), namely Algorithm 5.3.

As one can see, the convergence of iterations and quadrature formulae is strictly related to the quotient $M/m$ as the analysis suggests. Both quadrature rules show linear convergence, but the one based on rational minimax is much more effective. Regarding the scalings of the averaging/sign iteration, the fast convergence of the spectral scaling fits the fact that this case is made of $2\times 2$ matrices and hence two steps are sufficient for the convergence. Nevertheless, the spectral scaling has given better convergence in all of our experiments with respect to the determinantal scaling.

\begin{figure}
\begin{center}
\includegraphics[width=13cm,height=6cm]{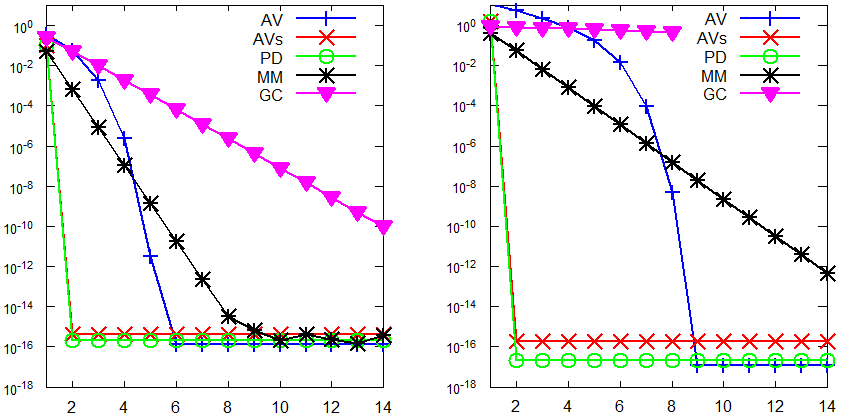}
\end{center}
\label{fig:test1}
\caption{Comparison of the accuracy obtained at various steps by the Averaging iterations (AV), by the Averaging iteration with spectral scaling (AVs), by the polar decomposition algorithm (PD) and the accuracy for various number of nodes of the rational minimax approximation algorithm (MM) and of the Gauss-Chebyshev quadrature (GC) on Test \ref{test:1} for $x=10$ (left) and $x=1000$ (right).}
\end{figure}

\end{Test}

\begin{Test}\label{test:2.5}\rm
Now we want to test the algorithms in some tough problems, we consider the identity matrix $I$ and a diagonal matrix whose diagonal elements are equally spaced between $1$ and $t$, for $t>1$. We test the algorithms for the couple $A=MM^*$, $B=MDM^*$, whose matrix mean is $MD^{1/2}M^*$, and where $M$ the Hilbert matrix which is a classical example of a very ill-conditioned matrix. The exact solution can be computed accurately since $A\#B=MD^{1/2}M^*$, and thus the relative error gives a genuine measure of the accuracy of the algorithms. 

We use the same algorithms as Test \ref{test:1} removing the one based on Gauss-Chebyshev quadrature, since it is much less efficient, and adding the Cholesky-Schur method (Algorithm 4.1) whose great stability guarantees the best forward error. In Figure \ref{fig:test2.5} there is a comparison of methods for $5\times 5$ matrices and for the values $t=10^2$ and $t=10^4$. In the case $t=10^2$ the relative condition number, as defined in Section \ref{sec:cond} is $1.5\cdot 10^6$ and the lowest error (about $10^{-9}$) is obtained by the Cholesky-Schur method, a similar accuracy with a lower computational cost is obtained by the polar decomposition, while the other algorithms seem to have more difficulties. The results are similar for $t=10^4$, the only difference is that now the convergence is slower and the conditioning is greater and thus the numerical results are poorer.

What we have experimented in most of the tests is that, besides the Cholesky-Schur method, the polar decomposition method where the polar factor is computed by Newton's method with  spectral scaling performs better than the other methods.

\begin{figure}
\begin{center}
\includegraphics[width=13cm,height=6cm]{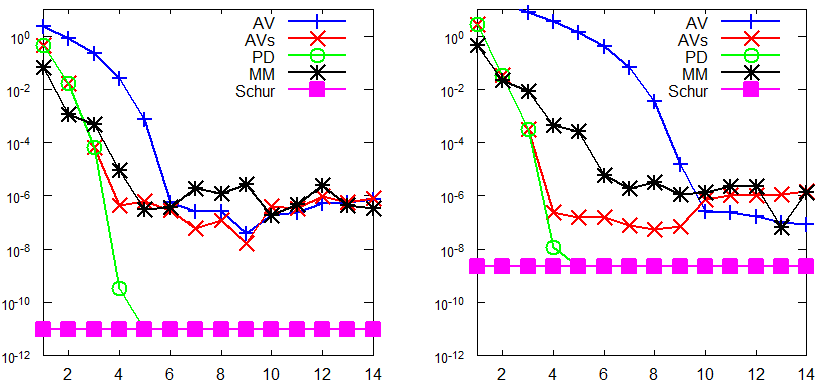}
\end{center}
\label{fig:test2.5}
\caption{Comparison of the accuracy obtained at various steps by the Averaging algorithm (AV), by the Averaging iteration with spectral scaling (AVs), by polar decomposition algorithm (PD) and the accuracy for various number of nodes of the rational minimax approximation algorithm (MM) and the Cholesky-Schur method (Schur) on Test \ref{test:2.5} with $t=10^2$ (left) and $t=10^4$ (right)}
\end{figure}

\end{Test}

\begin{Test}\label{test:3}\rm
We want to address the stability issues related to the iterations presented in Section \ref{sec:avera}. In fact, proving that the sequence (orbit) $\{X_k\}$ obtained by a matrix iteration $X_{k+1}=G(X_k)$, with a given $X_0$, converges in exact arithmetic is not sufficient to guarantees the numerical convergence. This fact has been observed for the (simplified) Newton method for matrix roots and has been first explained by Higham \cite{higham86}. The reason of the numerical failure is that the limit of the iteration is not a stable fixed point, in the sense that the derivative of $G$ at the fixed point has spectral radius larger than one and thus there are points $Y$ in any neighborhood of $G$ such that $G(Y)$ gets far from the fixed point. In finite arithmetic, rounding errors may cause a deviation from $X_k$ to a nearby point $\wt X_k$ whose orbit diverge.

We compute the derivative at $A\#B$ of the iterations defining the averaging iteration \eqref{eq:2nd1} and its uncoupled variant \eqref{eq:2nd1b1} showing that the first has spectral radius less than one (and so it is stable) for any $A$ and $B$, while the second has spectral radius greater than one for certain $A$ and $B$.

Define $G(X,Y)=[(X+Y)/2\ \ \ 2X(X+Y)^{-1}Y]$ such that the averaging iteration is $[A_{k+1}\ B_{k+1}]=G(A_k,B_k)$ then 
\begin{multline}\nonumber
	dG_{[X\ Y]}[H,K]\\=\!\left[\frac 12 (H\!+\!K)\ \ 2H(X\!+\!Y)^{-1}Y\!-\!2X(X\!+\!Y)^{-1}(H\!+\!K)(X\!+\!Y)^{-1}Y\!+\!2X(X\!+\!Y)^{-1}K\right]\!,
\end{multline}
from which we get in the vec basis $dG_{[A\#B\ A\#B]}=\frac 12\twotwo 1111$ whose spectral radius is one. This fact let us expect that a small perturbation on the iterates $A_k$ and $B_k$ of \eqref{eq:2nd1} near to the geometric mean is not amplified in the successive iterates.

On the other hand, define $F(X)=\frac 12(X+AX^{-1}B)$, then $dF_X[H]=\frac 12(H-AX^{-1}HX^{-1}B)$, in the vec basis we have
\[
	dF_{A\#B}=\frac 12(I-B(A\#B)^{-1}\otimes A(A\#B)^{-1})=\frac 12(I-Z\otimes Z^{-1}),
\]
where $Z=(BA^{-1})^{1/2}$. The eigenvalues of $dF_{A\#B}$ are of the form $\frac 12(1-\la_i/\la_j)$ where $\la_i,\la_j$ are any two eigenvalues of $Z$. Since the eigenvalues of $Z$ are real we get that $\rho(dF_{A\#B})\le 1$ if $\frac 12|1-\la_M/\la_m|\le 1$, that  is $\la_M/\la_n\le 3$, where $\la_M$ and $\la_m$ are the largest and the smallest eigenvalues of $Z$, respectively. Thus, we expect numerical instability for matrices $A$ and $B$ such that the quotient $\la_M/\la_n$ is greater than 3.

We consider the matrices of Test \ref{test:2.5} with $n=5$ and where the diagonal elements of $D$ are logarithmically spaced between $1$ and $10^{-t}$, for $t=0.5$ and $t=1.5$. In the former case we get $\rho(dF_{A\# B})\approx 1.8\le 3$, in the latter $\rho(dF_{A\#B})\approx 5.6>3$, and in fact in the first case the uncoupled averaging iteration \eqref{eq:2nd1b1} performs stably, while in the second case it reveals instability. In both cases the standard averaging iteration is stable. The results are drawn in Figure \ref{fig:test3}.

\begin{figure}
\begin{center}
\includegraphics[width=13cm,height=6cm]{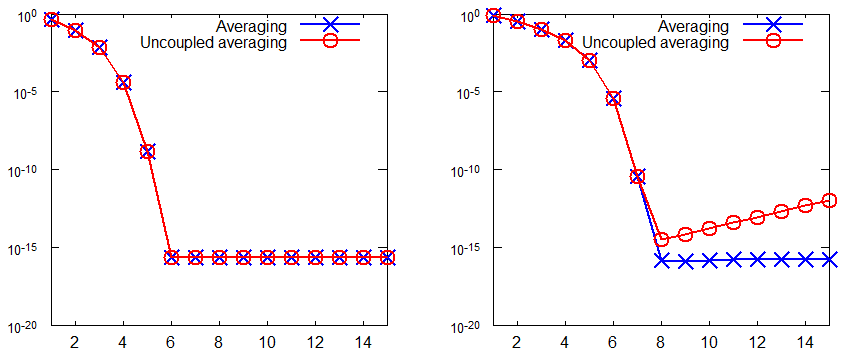}
\end{center}
\label{fig:test3}
\caption{Iteration step vs. relative error for the averaging iteration and its uncoupled version for $t=0.5$ (left) and $t=1.5$ (right) as in Test \ref{test:3}. In the second case the derivative of $G(X)$ has spectral radius greater than one and the uncoupled iteration shows numerical instability.}
\end{figure}

\end{Test}

%%%%%%%%%%%%%%%%%%%%%%%%%%%%%%%%%%%%%%%%%%

\section{Conclusions}\label{sec:conclusions}

We have studied the computational issues related to the matrix geometric mean of two positive definite matrices $A$ and $B$, from the conditioning to the classification of the numerical algorithms for computing $A\# B$. We have analyzed many algorithms, most of which are new, or have not yet been considered in the literature. The algorithms are either based on the Schur decomposition or are iterations or quadrature formulae converging to the geometric mean. A very nice fact is that all iterations and quadrature formulae we were able to found were related to the two important rational approximation of $z^{-1/2}$, namely, the Pad\'e approximation and the rational relative minimax approximation.

We have observed that the Pad\'e approximation requires a much high degree than the rational relative minimax approximation to get the same accuracy. On the other hand, the advantage of the Pad\'e approximation is that there exists a recurrence relation between the $[2^k,2^k-1]$ Pad\'e approximants to $z^{-1/2}$ and this recurrence leads to a quadratically convergent algorithm which outperforms the one based on rational minimax approximation. The quadratically convergent iterations can be scaled to get very efficient algorithms, as the one based on the polar decomposition of a suitable matrix. 

Our preferred algorithms for computing the matrix geometric mean are the one based on the Schur decomposition, namely the Cholesky-Schur algorithm, and the ones based on the scaled averaging and scaled polar decomposition, although for large matrices it may be necessary to use a quadrature formula as the rational minimax approximation. A better understanding of the problem $(A\#B)v$ with $A$ and $B$ large and sparse matrices and $v$ a vector is needed and is the topic of a future work. 

We wonder if some kind of recurrence could be found for the rational relative minimax approximation. Moreover, the algorithms based on the Pad\'e approximation benefit considerably by the scaling technique. One might wonder what is the interpretation of the scaling in terms of the approximation and if it is possible to get a ``scaled rational minimax'' approximation in order to accelerate the convergence.

Another issue is related to the equivalence of methods. For this problem we have found the equivalence between a Newton method, a Pad\'e approximation, the Cyclic Reduction and a Gaussian quadrature. We wonder if this intimate connection is true in more general settings. For instance, it would be nice to see the Cyclic Reduction algorithm as a function approximation algorithm.

%%%%%%%%%%%%%%%%%%%%%%%%%%%%%%%%%%%%%%%%%%

\section*{Acknowledgments} The author wish to thank George Trapp who kindly sent him some classical papers about the matrix geometric mean and Elena Addis a student who defended a thesis on these topics and who gave the remarkable
quote about the interpretation of the geometric mean as the mid-point of a geodesic:
\begin{quote}
It fills of geometric meaning what of geometric had just the
name.
\end{quote}

\bibliographystyle{abbrv}
\bibliography{matrixmean}

\end{document}